\theoremstyle{plain}
\newtheorem{theorem}{Theorem}[section]
\newtheorem{lemma}[theorem]{Lemma}
\newtheorem{conjecture}[theorem]{Conjecture}
\theoremstyle{definition}
\newtheorem{remark}[theorem]{Remark}
\numberwithin{equation}{section}
\gdef\x{\xi}
\begin{document}

\title[Isometric Embeddings and the Characteristic Variety]
{The Linearized System for Isometric Embeddings\\
and Its Characteristic Variety}
\author[Han]{Qing Han}
\address{Department of Mathematics\\
University of Notre Dame\\
Notre Dame, IN 46556} \email{qhan@nd.edu}
\author[Khuri]{Marcus Khuri}
\address{Department of Mathematics\\
Stony Brook University\\ Stony Brook, NY 11794}
\email{khuri@math.sunysb.edu}
\thanks{The first author acknowledges the support of NSF
Grant DMS-0654261. The second author acknowledges the support of
NSF Grant DMS-1007156 and a Sloan Research Fellowship.\\
Keywords: Isometric Embeddings; Characteristic Variety}
\begin{abstract}
In this paper we prove a conjecture of Bryant, Griffiths, and Yang
concerning the characteristic variety for the determined isometric
embedding system. In particular, we show that the characteristic
variety is not smooth for any dimension greater than 4. This is
accomplished by introducing a smaller yet equivalent linearized
system, in an appropriate way, which facilitates analysis of the
characteristic variety.
\end{abstract}
\maketitle

\section{Introduction}
\label{Sec-Intro}

Let $(M^{n},g)$ be an $n$-dimensional Riemannian manifold.
It is a classical problem to find
an isometric embedding
\begin{equation}\label{0.1}
(M^{n},g)\hookrightarrow\mathbb{R}^{N}.
\end{equation}
The existence of such a global isometric embedding for some $N$
was first proved by Nash \cite{Nash1956}. A better $N$ was later
found by G\"{u}nther \cite{Gunther1989}. In this paper we will
focus exclusively on the local isometric embedding problem.

Suppose that the metric $g=g_{ij}(x)dx^{i}dx^{j}$ is given in a
neighborhood of a point, say $(x^{1},\ldots,x^{n})=0$.  Then we seek
$N$ functions $\{u^{i}\}_{i=1}^{N}$ such that
\begin{equation*}
g=(du^{1})^{2}+\cdots+(du^{N})^{2}.
\end{equation*}
Therefore (\ref{0.1}) is equivalent to the local solvability of the
following first order nonlinear system
\begin{equation}\label{0.2}
\sum_{k=1}^{N}\partial_{x^i} u^{k}\cdot\partial_{x^j}
u^{k}=g_{ij} \quad\text{for }1\le i,j\le n.
\end{equation}
There are $n(n+1)/2$ equations and $N$ unknowns in this system.
Hence, this system is underdetermined if $N>n(n+1)/2$ and
overdetermined if $N<n(n+1)/2$. In the following, we will always
assume that $N=n(n+1)/2$.

For $n=2$, the existence of local isometric embeddings of surfaces
into $\mathbb R^3$ is equivalent to the existence of local
solutions of Darboux's equation, a fully nonlinear equation of the
Monge-Amp\`{e}re class. The type of Darboux's equation is
determined solely by the Gauss curvature. More precisely it is
elliptic if the Gauss curvature is positive, hyperbolic if the
Gauss curvature is negative, and degenerate if the Gauss curvature
has zeroes. Under various assumptions on the Gauss curvature, the
existence of local isometric embeddings was proven by Lin
\cite{Lin1985}, Lin \cite{Lin1986}, Han, Hong and Lin
\cite{HanHongLin2003}, Han \cite{Han2005}, Han \cite{Han200?}, Han
and Khuri \cite{HanKhuri}, Khuri \cite{Khuri1}, and Khuri
\cite{Khuri2}. (See \cite{Han-Hong2006} for details.)

The situation becomes more complicated for $n\ge 3$. Bryant,
Griffiths and Yang \cite{BGY} studied the local isometric
embedding problem for $n$-dimensional Riemannian manifolds and
analyzed the structure of the characteristic variety for the
linearized system. They proved the existence of local isometric
embeddings of 3-dimensional Riemannian manifolds into $\mathbb
R^6$ under an appropriate assumption on the curvature. Later on
Nakamura and Maeda \cite{Naka-Maeda1989} (independently Goodman
and Yang \cite{Goodman-Yang}) proved the existence of local
isometric embeddings of 3-dimensional Riemannian manifolds into
$\mathbb R^6$ when the Riemann curvature tensor does not vanish.

The difficulty in studying isometric embeddings of higher
dimensional Riemannian manifolds lies with the following two
related facts. First the differential system (\ref{0.2}) is very
large, consisting of $n(n+1)/2$ equations for $n(n+1)/2$ unknowns.
Second and most importantly, it is not at all clear how the
curvature determines the type of this system. Hence, a natural
first step is to investigate whether this huge system can be
simplified. Since (\ref{0.2}) is nonlinear this requires an
understanding of the linearized system. However due to its
invariance under the orthogonal group, (\ref{0.2}) is highly
degenerate in that every direction is characteristic, so a direct
study of the linearization appears to be futile. It is thus
necessary to replace the linearized equations by an equivalent
system which is easier to analyze. Bryant, Griffiths and Yang
\cite{BGY} pointed out that the linearization of (\ref{0.2}) is in
fact equivalent to a smaller differential system of $n$ equations
for $n$ unknowns. One may then focus attention on the structure of
the characteristic variety for this new system. For $n=3$, they
proved that the characteristic variety is smooth whenever certain parameters
in the linearized equations lie in appropriate ranges. The smoothness of the
characteristic variety plays an essential role in the existence results in
\cite{BGY}, \cite{Naka-Maeda1989}, and \cite{Goodman-Yang}. For
higher dimensions, they proved that the characteristic variety is
smooth for $n=4$ and not smooth for $n=6, 10, 14,\cdots$. They
also conjectured that the characteristic variety is not smooth for
any $n\ge 5$.

In this paper, we will put this equivalent linearized system in an
explicit form by introducing appropriate parameters. Based on this
explicit expression, we will prove that the characteristic variety
is indeed not smooth for all higher dimensions when these
parameters are sufficiently small.

To motivate our study, let $u$ be a solution of
(\ref{0.2}) and consider the linearization of (\ref{0.2}) at $u$. It
has the following form
\begin{equation}\label{0.3}
\partial_ju\cdot\partial_iv
+\partial_iu\cdot\partial_jv=f_{ij}\quad\text{for any }1\le i,j\le n.
\end{equation}
To find a better equation for $v$, we rewrite this as
\begin{equation}\label{0.4}
\partial_i(\partial_ju\cdot v)
+\partial_j(\partial_iu\cdot v)-2\partial_{ij}u\cdot
v=f_{ij}\quad\text{for any }1\le i,j\le n.
\end{equation}
We note that the inner product $\partial_ju\cdot v$ is a component
of the projection of $v$ into the tangent space spanned by
$\{\partial_1u,\cdots,\partial_nu\}$. It is clear from (\ref{0.4})
that the derivatives are only applied to tangential components of
$v$. This suggests that we should decompose $v$ relative to the
tangent space and normal space of the embedding $u$. In other
words, we uncouple the system by breaking $v$ into tangential and
normal components. It turns out that the normal components of $v$
satisfy an algebraic system which we solve first. Then the
tangential components of $v$ satisfy a differential system of
first order, which consists of $n$ equations for $n$ unknowns.
This new system is much easier to study than (\ref{0.3}).
Moreover, the curvature tensor of $g$ has an explicit expression
in terms of coefficients of this new system. In summary the
linearized isometric embedding system, an $n(n+1)/2\times
n(n+1)/2$ system, can be reduced to an $n\times n$ system which
can be put into an explicit form. We point out that (\ref{0.4})
appears in \cite{BGY} as (2.c.5) and (4.d.4), and that the
equivalent $n\times n$ system is given by (4.d.5).

Now we describe this new $n\times n$ differential system in a more
explicit way. To start with, let $g$ be a smooth metric in a
neighborhood of the origin in $\mathbb R^n$. For $i,j,k=1,\ldots,n$,
let ${\bf c}=\{c_{i}^{kj}\}_{k\neq j}$ be a collection of parameters with
$$c_{i}^{kj}=c_{i}^{jk}\quad\text{for any } i, j, k \text{ with }j\neq k,$$
and set
\begin{align*}
&c_{i}^{ii}=1\quad\text{for any }i,\\
&c_{i}^{jj}=0\quad\text{for any }i\neq j. \end{align*}
There are $n^{2}(n-1)/2$ elements in $\bf c$.
Now define $n\times n$ matrices $A^1, \ldots, A^n$ by
$$(A^k)_{ij}=(c_{i}^{kj}).$$
We may then formulate a differential system in the following way
\begin{equation}\label{0.6}
A^1\partial_{1}V+
\cdots+
A^n\partial_{n}V=F,
\end{equation}
where $V, F$ are vector-valued functions of $n$ components. This
system has constant coefficients which are related to the
curvature of $g$ at the origin; this will be described in detail
in Section \ref{section-reduction}. An important result, stated in
Lemma \ref{lemma-linearization}, asserts that {\it (\ref{0.6}) is
the equivalent linearized isometric embedding system evaluated at
$x=0$}.

As mentioned above, the main step of changing (\ref{0.3}) into an
equivalent $n\times n$ differential system was already observed in
\cite{BGY}, and a version of this equivalent system was given by
(4.d.5). However, the explicit form of (\ref{0.6}) in this paper
is new. As we will see, this explicit form has natural
advantages when it comes to analyzing the characteristic variety
in detail.

To better understand (\ref{0.6}), it is important to study its
characteristic variety. For each $\xi=(\xi_1,\cdots,
\xi_n)\in\mathbb R^n$ define
$$P=P(\xi,{\bf c})=\sum_{i=1}^n\xi_kA^k,$$
where ${\bf c}$ is as above. This is the {\it principal symbol}, and
the associated {\it characteristic variety} is then given by
$$\Sigma({\bf c})=\{\xi\in\mathbb R^n\setminus\{0\}\mid \det P(\xi, {\bf c})=0\}.$$

In dimension 3, and under the assumption that the matrices $A^{k}$ are symmetric,
it was shown (\cite{BGY}) that $\Sigma({\bf c})$ is
smooth in $\mathbb R^3\setminus\{0\}$ except for three choices of
${\bf c}$. Moreover in higher dimensions, it was shown that
$\Sigma({\bf c})$ is generally smooth in $\mathbb R^n\setminus\{0\}$ for
$n=4$ but not smooth in $\mathbb R^n\setminus\{0\}$ for $n=6, 10,
14, \cdots$. (See Corollary (1.c.6) in \cite{BGY}.) The following
conjecture was posed in \cite{BGY}.

\begin{conjecture}
$\Sigma({\bf c})$ is not smooth in $\mathbb R^n\setminus\{0\}$ for $n\ge 5$.
\end{conjecture}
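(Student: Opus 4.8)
The plan is to exploit the explicit form of the system \eqref{0.6} and reduce the question of smoothness of $\Sigma(\mathbf{c})$ to a tractable algebraic question about the polynomial $p(\xi)=\det P(\xi,\mathbf{c})$ for a judiciously chosen, \emph{small} parameter vector $\mathbf{c}$. Since the conjecture only asserts non-smoothness for \emph{some} choice of $\mathbf{c}$, the freedom to take $\mathbf{c}$ small is the key to making the computation manageable: as $\mathbf{c}\to 0$, the matrices $A^k$ degenerate to highly structured objects (with $A^k$ having $1$ in the $(k,k)$ slot and small entries elsewhere), and $p(\xi)$ becomes a small perturbation of $\xi_1\xi_2\cdots\xi_n$ — or, after accounting for the symmetry constraints $c_i^{kj}=c_i^{jk}$, of some explicit product/low-degree form. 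First I would write out $p(\xi)=\det\big(\sum_k \xi_k A^k\big)$ as a polynomial in $\xi$ whose coefficients are polynomials in $\mathbf{c}$, isolate the leading (in $\mathbf{c}$) behavior, and identify a candidate singular point $\xi^0\in\Sigma(\mathbf{c})$, i.e.\ a point where $p(\xi^0)=0$ and $\nabla_\xi p(\xi^0)=0$ simultaneously.

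The main steps, in order, are: (i) record the precise shape of $P(\xi,\mathbf{c})$ and expand $p(\xi)=\det P$; to leading order in small $\mathbf{c}$ the off-diagonal contributions are controlled, so $p$ has a dominant term $\xi_1\cdots\xi_n$ plus corrections of order $|\mathbf{c}|^2$ (the linear-in-$\mathbf{c}$ corrections should cancel or be absent because each $A^k$ has exactly one nonzero diagonal entry). (ii) Make an explicit ansatz for $\mathbf{c}$ — e.g.\ turn on only the parameters $c_i^{kj}$ for one fixed pair $\{k,j\}$, or set all free parameters equal to a single small $t$ — so that $p(\xi)$ collapses to a concrete polynomial in $\xi$ and $t$. (iii) For this reduced $p$, locate a common zero $\xi^0$ of $p$ and all its first partials; here the hypersurface $\{p=0\}$ will typically contain a coordinate subspace $\{\xi_i=0\}$ together with a transverse branch, and their intersection furnishes a singular locus. (iv) Verify $\xi^0\neq 0$ and that $\xi^0$ persists for all sufficiently small $t\neq 0$ (so the singularity is genuine, not an artifact of $\mathbf{c}=0$), and check that the dimension constraint $n\ge 5$ is exactly what allows enough free parameters / enough coordinates for the two branches to meet away from the origin — this is where the cases $n=3,4$ drop out.

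The main obstacle I anticipate is step (iii) combined with (iv): showing that the singular point one finds at $\mathbf{c}=0$ does \emph{not} disappear under perturbation, and that it is not one of the "trivial" singular points forced by the constraints $c_i^{ii}=1$, $c_i^{jj}=0$ for every admissible $\mathbf{c}$ (BGY already know $\Sigma$ can fail to be smooth at special $\xi$; one must produce genuine non-smoothness, i.e.\ a singular point of the \emph{variety}, not merely a bad choice of defining equation). Concretely, one needs to rule out that $p$ factors as $q^2\cdot(\text{smooth})$ near $\xi^0$ for trivial reasons, and instead exhibit that the Hessian / the local structure of $\{p=0\}$ at $\xi^0$ is that of a genuine node or higher singularity. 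I expect this to be handled by a direct local analysis of the reduced polynomial from step (ii): compute $p$, its gradient, and enough of its Hessian at the candidate point to certify a nondegenerate quadratic cone (two distinct sheets), and then invoke continuity of the discriminant in $t$ to conclude the singularity survives for all small $t\neq 0$, completing the proof of the conjecture for every $n\ge 5$.
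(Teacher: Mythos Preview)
Your proposal contains a misreading of the conjecture. You write that ``the conjecture only asserts non-smoothness for \emph{some} choice of $\mathbf{c}$,'' and accordingly plan to pick a single convenient ansatz (say all free parameters equal to a small $t$). But the conjecture, as posed by Bryant--Griffiths--Yang and as the paper understands it, is that $\Sigma(\mathbf{c})$ is non-smooth for \emph{every} (or at least generic) $\mathbf{c}$ once $n\ge 5$. Exhibiting one special $\mathbf{c}$ with singular $\Sigma(\mathbf{c})$ would be a much weaker statement. The paper's Theorem~1.2 establishes non-smoothness for \emph{all} small $\mathbf{c}$ satisfying an explicit open genericity condition; your one-parameter ansatz would not reach this.

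Beyond the quantifier issue, your technical route differs from the paper's in a way that would make the computation harder, not easier. You propose to locate points where $p=\det P$ and $\nabla_\xi p$ vanish simultaneously, then argue directly that the variety is singular there (ruling out repeated-factor artifacts by hand). The paper instead invokes the BGY characterization (Lemma~4.1 here): the singular locus of $\Sigma(\mathbf{c})$ is \emph{exactly} the set where $\operatorname{rank}P\le n-2$, i.e.\ where all $(n-1)\times(n-1)$ minors vanish. This immediately removes the ``is it a genuine variety singularity?'' worry you flag. The second key idea you are missing is a linear-algebra reduction (Lemmas~4.2--4.3): under mild full-rank hypotheses on submatrices, the vanishing of \emph{four} well-chosen minors forces all $(n-1)\times(n-1)$ minors to vanish. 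This cuts the system down to four equations in the $n-1$ projective unknowns, and the dimension count $n-1\ge 4$ is precisely why $n\ge 5$ enters. The paper then solves these four equations by the implicit function theorem near the limit $t\to 0$ (where $p\to\xi_1\cdots\xi_n$, as you correctly observe), producing for each small generic $\mathbf{c}$ an explicit $(n-5)$-dimensional family of singular points. Your gradient-based approach gives $n$ equations after Euler's relation --- always overdetermined in $\mathbb{P}^{n-1}$ --- so the counting that makes $n\ge 5$ special is obscured.
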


Based on the explicit form of the principal symbol, we will give
an affirmative answer to this conjecture for small ${\bf c}$. We will
say that the parameters ${\bf c}$ satisfy a \textit{generic condition} if
they satisfy a finite number of (homogeneous) polynomial inequalities. For a precise
statement of the following result, see Theorems 4.3 and 4.4 below.

\begin{theorem}\label{theorem-nonempty}
For any $n\ge 5$ and any small ${\bf c}$ satisfying a generic condition,
$\Sigma({\bf c})$ is not smooth in $\mathbb R^n\setminus\{0\}$.
\end{theorem}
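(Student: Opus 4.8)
The plan is to work directly with the explicit form of the principal symbol $P(\xi,{\bf c})=\sum_k\xi_k A^k$ and to locate, for each $n\ge 5$, a point $\xi^0\in\Sigma({\bf c})$ at which the characteristic variety fails to be smooth. The natural mechanism for non-smoothness is that $\Sigma({\bf c})$ has a \emph{singular point} in the algebraic sense: a point where $\det P(\xi,{\bf c})=0$ together with all first partials $\partial_{\xi_1}\det P,\ldots,\partial_{\xi_n}\det P$ vanishing. Equivalently — and this is the more tractable criterion — one looks for $\xi^0$ such that $P(\xi^0,{\bf c})$ drops rank by at least $2$, i.e. $\operatorname{corank}P(\xi^0,{\bf c})\ge 2$; by the standard formula for the differential of a determinant in terms of the adjugate, if the kernel is at least two-dimensional then the adjugate vanishes and hence $d(\det P)=0$ at $\xi^0$. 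So the whole problem reduces to: \emph{show that for small generic ${\bf c}$ and every $n\ge 5$ there is a nonzero $\xi^0$ with $\dim\ker P(\xi^0,{\bf c})\ge 2$, and that near such a point $\Sigma({\bf c})$ is genuinely not a smooth hypersurface.}

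First I would set ${\bf c}=0$ and compute $P(\xi,0)$ explicitly. With $c_i^{ii}=1$, $c_i^{jj}=0$ for $i\ne j$, and all the off-diagonal $c_i^{kj}$ ($k\ne j$) set to zero, one gets $(A^k)_{ij}=\delta_{ik}\delta_{ij}$-type entries, so $P(\xi,0)$ is an explicit, very sparse matrix (essentially built from the $\xi_i$ placed according to the combinatorics of the index set $\{(i,kj):k\ne j\}$). The task at ${\bf c}=0$ is to find the corank-$\ge 2$ locus of this model symbol, identify a point $\xi^0$ there, and — crucially — compute the \emph{Hessian} of $\det P$ (or of the relevant $2\times 2$ minor of $P$ restricted to a complement of the kernel) at $\xi^0$. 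If that quadratic form is nondegenerate of indefinite signature, then locally $\Sigma$ looks like a quadric cone $\{q(\xi)=0\}$, which is not a smooth manifold at its vertex; this is the content I would extract from Theorems 4.3 and 4.4 referenced in the text. Then I would turn on small ${\bf c}\ne 0$ and run a perturbation/implicit-function argument: the corank-$2$ condition is cut out by vanishing of finitely many minors, and provided the relevant Jacobian (in $\xi$ and in the auxiliary parameters describing the two-dimensional kernel) has maximal rank at $({\bf c},\xi)=(0,\xi^0)$, the singular point persists for small ${\bf c}$, moving to some $\xi^0({\bf c})$ depending smoothly on ${\bf c}$; the nondegeneracy of the local Hessian is an open condition, hence survives. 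The "generic condition" in the statement is exactly the finite list of polynomial inequalities expressing (i) that the linearization of the minor-equations is surjective and (ii) that the Hessian stays nondegenerate and indefinite — ensuring $\xi^0({\bf c})\in\mathbb{R}^n$ is real and the singularity is real.

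The step I expect to be the genuine obstacle is the \emph{combinatorial/linear-algebra heart}: for each $n\ge 5$, exhibiting an honest $\xi^0\in\mathbb{R}^n\setminus\{0\}$ with $\operatorname{corank}P(\xi^0,0)\ge 2$ and computing the sign type of the Hessian there. The $n=6,10,14,\ldots$ cases handled in \cite{BGY} presumably exploited special algebraic features (factorizations of $\det P$ tied to those dimensions), so the new content is to produce a \emph{uniform} construction covering all $n\ge 5$, including the "sporadic-looking" cases $n=5,7,8,9,11,\ldots$. I would attempt this by choosing $\xi^0$ supported on only a few coordinates — say $\xi^0=(1,t,0,\ldots,0)$ or a similarly low-dimensional ansatz — so that $P(\xi^0,0)$ becomes block-structured, reducing the corank computation to a small fixed-size problem independent of $n$, with $t$ a free parameter tuned to force the extra kernel vector. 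Verifying that this can always be done, and that the accompanying Hessian is indefinite rather than definite or degenerate, is where the real work lies; everything after that is a standard application of the implicit function theorem and the openness of the indefiniteness condition.
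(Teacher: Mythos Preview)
Your overall architecture --- find a corank-$2$ point of $P$ and persist it by the implicit function theorem for small ${\bf c}$ --- is correct and is exactly what the paper does. But several of the steps you flag as hard are trivial, and the steps you gloss over are where the content actually lies.

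First, at ${\bf c}=0$ the symbol is simply $P(\xi,0)=\operatorname{diag}(\xi_1,\ldots,\xi_n)$, so the corank-$\ge 2$ locus is $\{\xi:\text{at least two }\xi_i=0\}$; there is nothing to compute. Your proposed ansatz $\xi^0=(1,t,0,\ldots,0)$ is the wrong one: it has corank $n-2$, which makes the perturbation ill-conditioned. The paper instead takes $\xi^0=(0,0,a_3,\ldots,a_n)$ with all $a_k\neq 0$, so that $P(\xi^0,0)$ has corank exactly $2$ --- this is what makes the implicit-function step clean.

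Second, the Hessian analysis you sketch is unnecessary. The paper simply invokes (as Lemma~\ref{MainLemma}, proved in \cite{BGY}) that the corank-$\ge 2$ locus \emph{is} the singular locus of $\Sigma$. Your adjugate argument gives one direction of this; the other direction, and the passage from ``singular as a variety'' to ``not a smooth submanifold,'' is handled in \cite{BGY} and is taken as input.

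Third, the real work you are missing is this: ``$\operatorname{corank}\ge 2$'' means all $n^2$ minors of size $(n-1)\times(n-1)$ vanish, which is far too many equations to solve directly by IFT. The paper proves a linear-algebra lemma (Lemma~\ref{Lemma-LinearDepend}/\ref{Lemma-ZeroMinors}) reducing this, under a full-rank condition on certain $(n-1)\times(n-2)$ submatrices, to just \emph{four} minor equations $\det P^1_1=\det P^2_2=\det P^1_2=\det P^2_1=0$. After setting $\xi_1=ty_1$, $\xi_2=ty_2$ and expanding, these four determinants become $t^2G_i(z,t)$ with $G:\mathbb R^4\times\mathbb R\to\mathbb R^4$ and $D_zG(\bar z,0)=a_3\cdots a_n\, I_4$, so IFT applies immediately. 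The ``generic conditions'' (\ref{Cond1})--(\ref{Cond2}) are precisely what guarantee (i) the $2\times 2$ system determining $(a_3,\ldots,a_n)$ up to an $(n-5)$-parameter family has solutions with all $a_k\neq 0$, and (ii) the full-rank hypotheses of the reduction lemma hold, so that the remaining $n^2-4$ minor conditions follow automatically. Your outline does not contain either of these ingredients, and without the reduction-to-four-equations step the IFT argument cannot be set up.
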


As is shown in the proof, the generic condition will be given explicitly. In the case of dimension 4, we will show that under generic
conditions and the smallness assumption the characteristic variety is smooth; a
more general result of this nature has already been obtained in \cite{BGY}. Our proof
here is different.

Let $\Sigma_{\text{sing}}({\bf c})$ be the singular part of
$\Sigma({\bf c})$. We will prove that for $n=5$ the set
$\Sigma_{\text{sing}}({\bf c})\cap \mathbb{P}^4$ generically consists of
exactly $10+\alpha+2\beta+3\gamma$ points when ${\bf c}$ is sufficiently small, where
$\alpha$, $\beta$, $\gamma$ are nonnegative integers with $\alpha+\gamma=10$ and $\beta\leq 5$,
and where $\mathbb{P}^{4}$ denotes real projective space. These points can be located in
terms of the components of ${\bf c}$. In the general case $n\ge 6$, it will be shown
that $\Sigma_{\text{sing}}({\bf c})\cap \mathbb S^{n-1}$ contains a
smooth surface of dimension $n-5$ for sufficiently small ${\bf
c}$ (also assuming generic conditions). We believe that
$\Sigma_{\text{sing}}({\bf c})\cap \mathbb S^{n-1}$ itself consists of
an algebraic variety of dimension $n-5$, possibly under extra assumptions
on ${\bf c}$. Such an algebraic variety may have singularities. For example
for $n=6$, $\Sigma_{\text{sing}}({\bf c})\cap \mathbb S^{5}$ should consist
of finitely many curves which intersect at finitely many points.
We note that it would be desirable to have a stratification of
$\Sigma_{\text{sing}}({\bf c})\cap \mathbb S^{n-1}$ for any $n\ge
5$. Of course, it also remains a challenge to remove the smallness
assumption on $\bf c$.

This paper is organized in the following way. In Section
\ref{section-approximate} we construct appropriate approximate
solutions to the isometric embedding system. In
Section \ref{section-reduction} we discuss the linearized equations
and introduce our explicit equivalent
$n\times n$ system. Lastly in Section \ref{section-HighDim}, we
examine the characteristic variety and prove Theorem \ref{theorem-nonempty}.

\section{Constructing Approximate
Solutions}\label{section-approximate}

In this section we construct an appropriate approximate isometric
embedding, which plays an important role in later discussions.

We first briefly review the theory of surfaces in
Euclidean spaces. In this paper, we will exclusively discuss
$n$-dimensional surfaces in Euclidean space of dimension $s_n$.
Here
$$s_n=\frac12n(n+1).$$ Hence, the codimension is
$$s_n-n=\frac12n(n-1).$$
The
Einstein summation convention will be used with respect to indices
$1\le i,j,k,\cdots\le n$ and $1\leq\mu, \tau,\cdots\leq n(n-1)/2$.

Let $u:\mathbb{R}^{n}\rightarrow\mathbb{R}^{n(n+1)/2}$
be a smooth embedding. Denote the corresponding embedded submanifold by
$\mathcal{M}^{n}$. Then
$\{\partial_i u(x)\}_{i=1}^{n}$ spans $T_{x}\mathcal{M}^{n}$ for
each $x$. Let
$\{N_{\mu}(x)\}_{\mu=1}^{n(n-1)/2}$ span
$(T_{x}\mathcal{M}^{n})^{\bot}$, the orthogonal complement of
$T_{x}\mathcal{M}^{n}$ in $\mathbb{R}^{n(n+1)/2}$.
Denote the induced metric on $\mathcal{M}^{n}$ by
\begin{equation*}
p_{ij}=\partial_i u\cdot\partial_j u.
\end{equation*}
Now recall the fundamental equations for the surface induced by
$u$. Namely $\partial_{ij}u$ has a decomposition into its
tangential and normal components, with respect to $u$, given by
\begin{equation}\label{1.4}
\partial_{ij}u=\Gamma_{ij}^{k}\partial_k u+H_{ij},
\end{equation}
where $\Gamma_{ij}^{k}$ are Christoffel symbols corresponding to
$p_{ij}$ and $H_{ij}$ is the second fundamental form. Moreover we
have
\begin{equation}\label{1.5}
\partial_jN_\mu\cdot\partial_iu=-N_\mu\cdot\partial_{ij}u=-N_{\mu}\cdot H_{ij}.
\end{equation}
By setting $H_{ij}^{\mu}=H_{ij}\cdot N_{\mu}$, $1\leq\mu\leq
n(n-1)/2$, we have
$$H_{ij}=\sum_{\mu=1}^{n(n-1)/2}H_{ij}^\mu N_{\mu}.$$
Also the Gauss equations are given by
\begin{equation}\label{1.13}
R_{ijkl}=\sum_{\mu=1}^{n(n-1)/2}H_{ik}^{\mu}H_{jl}^{\mu}
-H_{il}^{\mu}H_{jk}^{\mu},
\end{equation}
where $R_{ijkl}$ is the curvature tensor associated with the metric $p_{ij}$.

Next we construct approximate solutions to the isometric embedding
system. Let $g$ be a metric defined in a neighborhood of the
origin in $\mathbb R^n$. Take normal coordinates so that
\begin{equation}\label{2.1}
g_{ij}(0)=\delta_{ij}\quad\text{ and }\quad
\partial_{k}g_{ij}(0)=0\quad\text{for any }1\leq i,j,k\leq n.
\end{equation}
Consider a map
$u=(u^{1},\ldots,u^{n(n+1)/2}):\mathbb{R}^{n}\rightarrow
\mathbb{R}^{n(n+1)/2}$ whose components are given by
\begin{align}\label{2.1a}\begin{split}
u^{l}=&x^{l}+\frac{1}{3!}\!\!\sum_{1\leq i,j,k\leq
n}\!\!\!\!\alpha_{ijk}^{l}x^{i}x^{j}x^{k}\quad\text{for any }l=1,\ldots, n,\\
u^{n+\mu}=&\frac{1}{2}\!\!\sum_{1\leq i,j\leq
n}\!\!\!\! h_{ij}^{\mu}x^{i}x^{j}\quad\text{for any }\mu=1,\ldots, n(n-1)/2,
\end{split}\end{align}
for some constants $\alpha_{ijk}^{l}$ and $h_{ij}^\mu$,
$i,j,k,l=1,\ldots,n$ and $\mu=1,\ldots,n(n-1)/2$. We will now
investigate whether the induced metric $du\cdot du$ agrees with
the given metric $g$ up to order two at the origin.

First note that for any $i,j,k=1,\ldots, n$ and $\mu=1,\ldots,
n(n-1)/2$, we have
\begin{align}\label{2.3}\begin{split}
&u(0)=0,\\
&\partial_ju^{i}(0)=\delta_{ij},\quad
\partial_ju^{n+\mu}(0)=0,\\
&\partial_{ij}u^{k}(0)=0,\quad
\partial_{ij}u^{n+\mu}(0)=h_{ij}^{\mu},\\
&\Gamma_{ij}^{k}(0)=0,\\
&N_{\mu}(0)=(0,\ldots,0,\stackrel{n+\mu}{1},0,\ldots,0).
\end{split}\end{align}
Furthermore according to (\ref{2.1}), the metric induced by the embedding $u$
agrees with the given metric $g$ up to order one at the origin.
In order for such a metric to agree with $g$ up to order two at
the origin, we must have
\begin{equation}\label{2.4}
\partial_{kl}g_{ij}(0)=\partial_{ik}u(0)\cdot \partial_{lj}u(0)+\partial_{il}u(0)\cdot \partial_{jk}u(0)
+\partial_{j}u(0)\cdot \partial_{ikl}u(0)+\partial_{i}u(0)\cdot \partial_{ljk}u(0).
\end{equation}
Recall the expression for the curvature tensor in normal
coordinates
\begin{equation}\label{2.5}
R_{ijkl}=\frac{1}{2}(\partial_{il}g_{jk}+\partial_{jk}g_{il}
-\partial_{ik}g_{jl}-\partial_{jl}g_{ik}).
\end{equation}
Hence (\ref{2.4}) implies that
\begin{equation*}
R_{ijkl}(0)=\partial_{ik}u(0)\cdot \partial_{jl}u(0)-\partial_{il}u(0)\cdot \partial_{jk}u(0).
\end{equation*}
Therefore we have
\begin{equation}\label{2.2}
R_{ijkl}(0)=\sum_{\mu=1}^{n(n-1)/2}h_{ik}^{\mu}h_{jl}^{\mu}
-h_{il}^{\mu}h_{jk}^{\mu}.
\end{equation}
These are simply the Gauss equations when $h_{ij}^\mu$ are
interpreted as the coefficients of the second fundamental form at
$x=0$. In other words, the Gauss equations (\ref{2.2}) are a
necessary condition for $u$ to be an approximate solution of the
isometric embedding system up to order two. Next, we prove that it
is also a sufficient condition.

\begin{lemma}\label{lemma2.1}
Let $g$ be a smooth metric defined in a neighborhood of the origin
in $\mathbb R^n$ and let $R_{ijkl}$ be its curvature tensor. For
any constants $h_{ij}^\mu$ satisfying (\ref{2.2}), there exist
constants $\alpha_{ijk}^l$ such that the map
$u:\mathbb{R}^{n}\rightarrow\mathbb{R}^{n(n+1)/2}$ in (\ref{2.1a})
satisfies
\begin{equation*}
du\cdot du-g=O(|x|^{3})\quad\text{as }|x|\rightarrow 0.
\end{equation*}
\end{lemma}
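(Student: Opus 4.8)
The plan is to expand $du\cdot du-g$ in a Taylor series at the origin and to choose the constants $\alpha_{ijk}^{l}$ so that all coefficients through order two vanish; since $u$ in \eqref{2.1a} produces no contributions to these orders beyond the ones recorded below, this yields $du\cdot du-g=O(|x|^{3})$. The order zero and order one parts vanish for any such $u$: by \eqref{2.3}, $(du\cdot du)_{ij}(0)=\partial_i u(0)\cdot\partial_j u(0)=\delta_{ij}=g_{ij}(0)$ and $\partial_k(du\cdot du)_{ij}(0)=\partial_{ik}u(0)\cdot\partial_j u(0)+\partial_i u(0)\cdot\partial_{jk}u(0)=\partial_{ik}u^{j}(0)+\partial_{jk}u^{i}(0)=0=\partial_k g_{ij}(0)$, the last equality by \eqref{2.1}. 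So everything reduces to the order two condition, which is exactly \eqref{2.4}. Evaluating \eqref{2.4} with \eqref{2.3} — the tangential part of $\partial_{ij}u(0)$ vanishes while its normal part is $h_{ij}^{\mu}$, and $\partial_{ikl}u(0)$ has vanishing normal part and tangential part $\alpha_{ikl}^{j}$ — turns \eqref{2.4} into the constant-coefficient linear system
\[
\alpha_{ikl}^{j}+\alpha_{ljk}^{i}=\partial_{kl}g_{ij}(0)-\sum_{\mu}\bigl(h_{ik}^{\mu}h_{jl}^{\mu}+h_{il}^{\mu}h_{jk}^{\mu}\bigr)=:G_{ijkl}
\]
for the unknowns $\alpha_{ikl}^{j}$, which must be symmetric in the three lower indices; note that $G_{ijkl}$ is symmetric in $(i,j)$ and in $(k,l)$. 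The text already shows \eqref{2.4} forces \eqref{2.2}; what remains is the converse, i.e.\ the solvability of this system once \eqref{2.2} holds.

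The crux is that the Gauss equations \eqref{2.2} make $G$ \emph{totally} symmetric. In the chosen normal coordinates one has $\partial_{kl}g_{ij}(0)=-\tfrac13\bigl(R_{ikjl}(0)+R_{iljk}(0)\bigr)$, from the standard expansion $g_{ij}(x)=\delta_{ij}-\tfrac13 R_{ikjl}(0)\,x^{k}x^{l}+O(|x|^{3})$, which is consistent with \eqref{2.5}. Replacing $R_{ikjl}(0)$ and $R_{iljk}(0)$ by their expressions from \eqref{2.2}, expanding, and collapsing the result with the first Bianchi identity gives, after a short calculation,
\[
G_{ijkl}=-\tfrac23\sum_{\mu}\bigl(h_{ij}^{\mu}h_{kl}^{\mu}+h_{ik}^{\mu}h_{jl}^{\mu}+h_{il}^{\mu}h_{jk}^{\mu}\bigr),
\]
which is invariant under every permutation of $(i,j,k,l)$. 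The system is then solved at once by
\[
\alpha_{ijk}^{l}:=\tfrac12 G_{lijk}=-\tfrac13\sum_{\mu}\bigl(h_{li}^{\mu}h_{jk}^{\mu}+h_{lj}^{\mu}h_{ik}^{\mu}+h_{lk}^{\mu}h_{ij}^{\mu}\bigr),
\]
which is manifestly symmetric in $(i,j,k)$, and by total symmetry of $G$ one has $\alpha_{ikl}^{j}+\alpha_{ljk}^{i}=\tfrac12 G_{jikl}+\tfrac12 G_{iljk}=G_{ijkl}$. Hence \eqref{2.4} holds, the $2$-jets of $du\cdot du$ and $g$ coincide at the origin, and $du\cdot du-g=O(|x|^{3})$, as claimed.

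I expect the only genuine obstacle to be the tensor identity in the second paragraph, namely that $G$ becomes totally symmetric once \eqref{2.2} is imposed: its proof uses the precise normal-coordinate form of $\partial^{2}g(0)$ together with a careful application of the first Bianchi identity to cancel the antisymmetric, ``curvature-type'' combinations $h_{ik}^{\mu}h_{jl}^{\mu}-h_{il}^{\mu}h_{jk}^{\mu}$. A more structural alternative would be to observe that the symmetrization operator $\alpha\mapsto\alpha_{ikl}^{j}+\alpha_{ljk}^{i}$, restricted to the space of tensors sharing the symmetries of $G$, has cokernel isomorphic to the space of algebraic curvature tensors, so that solvability of the system is equivalent to the vanishing of the curvature-type component of $G$ — which is precisely \eqref{2.2}. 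I would nonetheless prefer the explicit computation above, since it additionally exhibits $\alpha$ in closed form.
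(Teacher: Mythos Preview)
Your argument is correct and genuinely different from the paper's.  The paper treats \eqref{2.4} as an overdetermined linear system for the $\alpha_{ijk}^{l}$, counts equations and unknowns, singles out a maximal subsystem that can be solved freely, and then verifies case by case (three separate index patterns) that the remaining equations follow from \eqref{2.5} together with the Gauss relations \eqref{2.2}.  You instead exploit the second--order Riemann normal coordinate expansion $\partial_{kl}g_{ij}(0)=-\tfrac{1}{3}\bigl(R_{ikjl}(0)+R_{iljk}(0)\bigr)$ to show that the obstruction tensor $G_{ijkl}$ is \emph{totally} symmetric once \eqref{2.2} holds, after which $\alpha_{ijk}^{l}=\tfrac{1}{2}G_{lijk}$ is an explicit solution.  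Your route is shorter, avoids the case splitting, and even produces a closed formula for $\alpha$; the paper's route has the advantage of using only \eqref{2.5}, which holds in any coordinates satisfying \eqref{2.1}, rather than the sharper Riemann normal coordinate identity you invoke.

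One small point to tighten: the phrase ``consistent with \eqref{2.5}'' does not by itself justify $\partial_{kl}g_{ij}(0)=-\tfrac{1}{3}(R_{ikjl}+R_{iljk})$, since \eqref{2.5} constrains but does not determine the second derivatives of $g$ at the origin.  You need the coordinates to be geodesic (Riemann) normal coordinates, not merely coordinates with $g_{ij}(0)=\delta_{ij}$ and $\partial_{k}g_{ij}(0)=0$.  The paper's phrase ``take normal coordinates'' does mean this, so your assumption is legitimate, but you should state it as such rather than as a consequence of \eqref{2.5}.
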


\begin{proof}  In the following, we denote derivatives of components of $u$ evaluated at the origin by
$u_{i}^{k}=\partial_{i}u^{k}(0)$,
$u_{ij}^{k}=\partial_{ij}u^{k}(0)$, etc. All quantities in the proof are evaluated
at the origin.
We need to find $\alpha_{ijk}^l$ so that (\ref{2.4}) holds. We now write (\ref{2.4}) in the form
\begin{equation}\label{2.4a}
\partial_{kl}g_{ij}=u_{ik}\cdot u_{lj}+u_{il}\cdot u_{jk}
+u_{j}\cdot u_{ikl}+u_{i}\cdot u_{ljk},
\end{equation}
and treat (\ref{2.4a}) as a linear system for $\alpha_{ijk}^l$. A
simple calculation yields that the total number of equations $A$
and unknowns $B$ are given by
\begin{equation*}
A=\left(\frac{n(n+1)}{2}\right)^{2},\quad
B=n\sum_{i=1}^{n}\frac{i(i+1)}{2}=\frac{n^{2}(n+1)(n+2)}{6}.
\end{equation*}
Obviously $A> B$. Hence, (\ref{2.4a}) is an overdetermined system.
Our strategy is to choose a collection of $B$ equations to solve
for $\alpha_{ijk}^l$ and then verify that the rest of the
equations hold automatically under the assumption (\ref{2.2}).

To this end, we first set
$$\tau_{ij}=(i-1)n-\frac12i(i+1)+j\quad\text{for }1\le i<j\le n.$$
Obviously $\tau_{i\, j+1}=\tau_{ij}+1$ and $\tau_{i+1\
i+2}=\tau_{in}+1$. Moreover $\tau_{12}=1$ and $\tau_{n-1\
n}=n(n-1)/2$. Hence $\tau_{ij}$ enumerates the set of integers
$\{1,\cdots, n(n-1)/2\}$ for $1\le i<j\le n$. In fact
$$1=\tau_{12}<\cdots<\tau_{1n}<\tau_{23}\cdots<\tau_{2n}<\cdots
<\tau_{n-1\ n}=\frac12n(n-1).$$ Now, we classify the equation for
$\partial_{kl}g_{ij}$ in (\ref{2.4a}) according to whether the
4-tuple $(i,j,k,l)$ satisfies the conditions
\begin{equation}\label{2.4aa}
i<j,\quad k<l, \quad \tau_{ij}\leq \tau_{kl}.
\end{equation}
We first solve those equations which do not satisfy (\ref{2.4aa}).
To see this, we calculate the number of equations $C$ of this
form:
\begin{equation*}
C=\frac12\cdot \frac{n(n-1)}2\cdot \left(\frac{n(n-1)}2+1\right).
\end{equation*}
The number of degrees of freedom remaining is given by
\begin{equation*}
B-(A-C)=\frac{n(n-1)(n-2)(n-3)}{24}\geq 0.
\end{equation*}
A further calculation shows that this value coincides with the
number of equivalence classes of 4-tuples with all entries
distinct and with $i<j$, $k<l$, $\tau_{ij}< \tau_{kl}$ (here we
say that two tuples are equivalent if they are permutations of
each other). Then in order to use up all the degrees of freedom,
we choose to have one equation of each of these equivalence
classes (where all entries are distinct and $i<j$, $k<l$,
$\tau_{ij}< \tau_{kl}$) satisfied.

The final task is to show that all remaining equations of
(\ref{2.4a}) follow from (\ref{2.2}), which has the form
\begin{equation}\label{2.2a}
R_{ijkl}(0)=u_{ik}\cdot u_{jl}-u_{il}\cdot u_{jk}.
\end{equation}
The remaining equations may be put into three cases.  The first
case occurs when $i=k$, $j=l$, $i<j$. In this case we need to
prove
\begin{equation}\label{2.6}
\partial_{ij}g_{ij}=u_{ii}\cdot u_{jj}+u_{ij}\cdot u_{ij}
+u_{j}\cdot u_{iij}+u_{i}\cdot u_{jji},\quad1\leq i< j\leq n.
\end{equation}
Consider the equations obtained by permuting these indices
\begin{eqnarray*}
\frac{1}{2}\partial_{ii}g_{jj}=&u_{ij}\cdot
u_{ij}+u_{j}\cdot u_{iij},\\
\frac{1}{2}\partial_{jj}g_{ii}=&u_{ij}\cdot
u_{ij}+u_{i}\cdot u_{jji}.
\end{eqnarray*}
These two equations are known to be satisfied.  By a simple addition, we get
\begin{align*}
&\frac{1}{2}\partial_{ii}g_{jj}+\frac{1}{2}\partial_{jj}g_{ii}\\
=&2u_{ij}\cdot
u_{ij}+u_{j}\cdot u_{iij}+u_{i}\cdot u_{jji}\\
=&u_{ij}\cdot
u_{ij}-u_{ii}\cdot u_{jj}+u_{ii}\cdot u_{jj}+u_{ij}\cdot u_{ij}
+u_{j}\cdot u_{iij}+u_{i}\cdot u_{jji}.
\end{align*}
By expressing $R_{ijij}$ in terms of (\ref{2.5}) and (\ref{2.2a}),
we have
\begin{equation*}
\partial_{ij}g_{ij}-\frac{1}{2}\partial_{ii}g_{jj}
-\frac{1}{2}\partial_{jj}g_{ii}
=u_{ii}\cdot u_{jj}-u_{ij}\cdot u_{ij}.
\end{equation*}
A simple comparison yields (\ref{2.6}).

The second case occurs when $k=j$, $i<j$, $j<l$, or $i=k$, $i<j$,
$i<l$, or $j=l$, $i<j$, $k<j$. We first consider $k=j$, $i<j$,
$j<l$. In this case, we need to prove
\begin{equation}\label{2.7}
\partial_{jl}g_{ij}=u_{ij}\cdot u_{jl}+u_{il}\cdot u_{jj}
+u_{j}\cdot u_{ijl}+u_{i}\cdot u_{jjl}.
\end{equation}
Consider the equations obtained by permuting these indices
\begin{eqnarray*}
\partial_{il}g_{jj}&=&2u_{ij}\cdot u_{jl}
+2u_{j}\cdot u_{ijl},\\
\partial_{jj}g_{il}&=&2u_{ij}\cdot u_{jl}
+u_{i}\cdot u_{ljj}+u_{l}\cdot u_{ijj},\\
\partial_{ij}g_{jl}&=&u_{ij}\cdot u_{jl}+u_{jj}\cdot u_{il}
+u_{j}\cdot u_{ijl}+u_{l}\cdot u_{ijj}.
\end{eqnarray*}
All three of these equations are known to be satisfied.  By adding the first two equations
and subtracting the third, we get
\begin{align*}
&\partial_{il}g_{jj}+\partial_{jj}g_{il}-\partial_{ij}g_{jl}\\
=&3u_{ij}\cdot u_{jl}-u_{jj}\cdot u_{il}+u_{j}\cdot u_{ijl}+u_{i}\cdot u_{jjl}\\
=&2u_{ij}\cdot u_{jl}-2u_{ij}\cdot u_{jl}+u_{il}\cdot u_{jj}
+u_{j}\cdot u_{ijl}+u_{i}\cdot u_{jjl}.
\end{align*}
By expressing $R_{ijjl}$ in terms of (\ref{2.5}) and (\ref{2.2a}),
we have
$$\partial_{il}g_{jj}+\partial_{jj}g_{il}
-\partial_{ij}g_{jl}-\partial_{jl}g_{ij} =2u_{ij}\cdot
u_{jl}-2u_{ij}\cdot u_{jl}.$$ A simple comparison yields
(\ref{2.7}), and a similar argument may be used for the cases
$i=k$, $i<j$, $i<l$ and $j=l$, $i<j$, $k<j$.

The third case occurs when $i<j$, $k<l$, $\tau_{ij}< \tau_{kl}$,
and all are distinct. Consider the permutations
\begin{eqnarray*}
\partial_{kl}g_{ij}&=&u_{ik}\cdot u_{lj}+u_{il}\cdot u_{jk}
+u_{j}\cdot u_{ikl}+u_{i}\cdot u_{ljk},\\
\partial_{jl}g_{ik}&=&u_{ij}\cdot u_{kl}+u_{il}\cdot u_{jk}
+u_{i}\cdot u_{jkl}+u_{k}\cdot u_{ijl},\\
\partial_{kj}g_{il}&=&u_{ij}\cdot u_{kl}+u_{ik}\cdot u_{jl}
+u_{i}\cdot u_{ljk}+u_{l}\cdot u_{ijk},\\
\partial_{ij}g_{kl}&=&u_{ik}\cdot u_{lj}+u_{il}\cdot u_{jk}
+u_{k}\cdot u_{lij}+u_{l}\cdot u_{kij},\\
\partial_{ik}g_{jl}&=&u_{ij}\cdot u_{lk}+u_{jk}\cdot u_{il}
+u_{j}\cdot u_{ikl}+u_{l}\cdot u_{ijk},\\
\partial_{il}g_{jk}&=&u_{ij}\cdot u_{kl}+u_{jl}\cdot u_{ik}
+u_{j}\cdot u_{ikl}+u_{k}\cdot u_{ijl}.
\end{eqnarray*}
The last three of these equations are known to be satisfied, where
as the first three need to be established (except for one, which
is known to be satisfied since these three lie in the same
equivalence class of distinct 4-tuples with $i<j$, $k<l$). Using
the last three equations in conjunction with the Gauss equations,
we have
\begin{eqnarray*}
\partial_{kl}g_{ij}-\partial_{jl}g_{ik}&=&-\partial_{ij}g_{kl}
+\partial_{ik}g_{jl}+2R_{kjil},\\
\partial_{kl}g_{ij}-\partial_{jk}g_{il}&=&-\partial_{ij}g_{kl}
+\partial_{il}g_{jk}+2R_{kijl},\\
\partial_{jl}g_{ik}-\partial_{jk}g_{il}&=&-\partial_{ik}g_{jl}
+\partial_{il}g_{jk}+2R_{jikl},
\end{eqnarray*}
and hence
\begin{eqnarray*}
\partial_{kl}g_{ij}-\partial_{jl}g_{ik}&=&u_{ik}\cdot u_{jl}-u_{kl}\cdot u_{ij}+u_{j}\cdot
u_{lik}-u_{k}\cdot u_{lij},\\
\partial_{kl}g_{ij}-\partial_{jk}g_{il}&=&
u_{kj}\cdot u_{il}-u_{kl}\cdot u_{ij}+u_{j}\cdot
u_{kil}-u_{l}\cdot u_{kij},\\
\partial_{jl}g_{ik}-\partial_{jk}g_{il}&=&
u_{jk}\cdot u_{il}-u_{jl}\cdot u_{ik}+u_{k}\cdot
u_{ijl}-u_{l}\cdot u_{ijk}.
\end{eqnarray*}
This may be viewed as three linear equations for the three unknown
$\partial_{kl}g_{ij}$, $\partial_{jl}g_{ik}$, $\partial_{jk}g_{il}$.
Upon solving this system, we find that the solution has the desired
form up to addition of a vector having the form
$(\beta,\beta,\beta)$.  However since at least one of these
equations is known to be satisfied a priori, it follows that
$\beta=0$ so that all are satisfied.
\end{proof}

Our main concern in this paper is the linearized equations
of the isometric embedding. We are interested in such a
linearization only at the formal isometric embedding or its
nearby functions. For this purpose, an approximate isometric
embedding is constructed in Lemma \ref{lemma2.1}. The constants
$h_{ij}^\mu$ are chosen to satisfy (\ref{2.2}). In order to obtain
a simple form of linearized equations, more
assumptions are needed.

\section{Reduction to an $n\times n$
System}\label{section-reduction}

In this section, we reduce the linearization of (\ref{0.2}) to a
first order $n\times n$ system and write the linearized equations
for the isometric embedding system as a perturbation of a first
order differential system with constant coefficients. The linearization
is evaluated at functions which are perturbations of the
approximate isometric embedding in Lemma \ref{lemma2.1}. As we
mentioned in Section \ref{Sec-Intro}, many arguments may be traced
back to \cite{BGY}.

Let $g$ be a metric defined in a neighborhood of the origin in
$\mathbb R^n$. The metric $g$ admits a smooth isometric embedding
into $\mathbb R^{n(n+1)/2}$ if there exists a map $w: \Omega\to
\mathbb{R}^{n(n+1)/2}$ such that
\begin{equation*}
\partial_iw\cdot\partial_j w=g_{ij} \quad\text{for any
}1\le i,j\le n,
\end{equation*}
where $\Omega\subset \mathbb R^n$ is a neighborhood of the origin.
Linearizing at a map
$u:\mathbb{R}^{n}\rightarrow\mathbb{R}^{n(n+1)/2}$ yields the
following linear equation for
$v:\mathbb{R}^{n}\rightarrow\mathbb{R}^{n(n+1)/2}$
\begin{equation}\label{1.1}
\partial_i u\cdot \partial_jv+\partial_j
u\cdot\partial_i v=f_{ij}\quad\text{for any
}1\leq i,j\leq n,
\end{equation}
where $(f_{ij})$ is some smooth symmetric matrix.

In the following, we fix a map $u:
\mathbb{R}^{n}\rightarrow\mathbb{R}^{n(n+1)/2}$ and assume that it
is an embedding. Denote by $\mathcal{M}^{n}$ the corresponding
embedded submanifold. Then $\{\partial_i u(x)\}_{i=1}^{n}$ spans
$T_{x}\mathcal{M}^{n}$ for each $x$. Let
$\{N_{\mu}(x)\}_{\mu=1}^{n(n-1)/2}$ span
$(T_{x}\mathcal{M}^{n})^{\bot}$, the orthogonal complement of
$T_{x}\mathcal{M}^{n}$ in $\mathbb{R}^{n(n+1)/2}$. Denote the
induced metric on $\mathcal{M}^{n}$ by
\begin{equation*}
p_{ij}=\partial_i u\cdot\partial_j u.
\end{equation*}
Then $\partial_{ij}u$ has a decomposition into its tangential and
normal components with respect to $u$ given by
\begin{equation}\label{1.4a}
\partial_{ij}u=\Gamma_{ij}^{k}\partial_k u+H_{ij},
\end{equation}
where $\Gamma_{ij}^{k}$ are the Christoffel symbols corresponding
to $p_{ij}$ and $H_{ij}$ is the second fundamental form. Moreover
we have
\begin{equation}\label{1.5a}
\partial_jN_\mu\cdot\partial_iu=-N_\mu\cdot\partial_{ij}u=
-N_{\mu}\cdot H_{ij}.
\end{equation}
By setting $H_{ij}^{\mu}=H_{ij}\cdot N_{\mu}$, $1\leq\mu\leq
n(n-1)/2$, we have
$$H_{ij}=\sum_{\mu=1}^{n(n-1)/2}H_{ij}^\mu N_{\mu}.$$
We note that (\ref{1.4a}) and (\ref{1.5a}) are simply (\ref{1.4}) and (\ref{1.5}).

In the following, we will express (\ref{1.1}) in another form
which is easier to study. For motivation, we rewrite it as
\begin{equation}\label{1.1aa}
\partial_i(\partial_ju\cdot v)
+\partial_j(\partial_iu\cdot v)-2\partial_{ij}u\cdot
v=f_{ij}\quad\text{for any }1\le i,j\le n.
\end{equation}
Note that $\partial_ju\cdot v$ is a component of the projection of
$v$ into the tangent space $T_x\mathcal M^n$. It is clear from
(\ref{1.1aa}) that the derivatives are only applied to tangential
components of $v$. This suggests that we should decompose $v$
relative to the tangent space and normal space of $\mathcal M^n$.

Set
\begin{equation}\label{1.2}
v=v'+v''=\sum_{k=1}^{n}v^{k}\partial_k
u+\sum_{\mu=1}^{n(n-1)/2} v^{n+\mu}N_{\mu},
\end{equation}
where $v'$ and $v''$ are the tangential and normal components of
$v$ with respect to the embedding $u$. We now derive an equivalent
formulation of (\ref{1.1}) in terms of $v^{k}$ and $v^{n+\mu}$.
Let $\{v_{l}\}_{l=1}^{n}$ be the coordinates of the dual 1-form to
the vector field $v^{l}\partial_l u$, i.e.,
\begin{equation*}
v_{l}=p_{lk}v^{k}\text{ }\text{ and }\text{ }v^{l}=p^{lk}v_{k}.
\end{equation*}
Then
\begin{align*}
\partial_iu\cdot\partial_jv
=&\partial_j(\partial_iu\cdot v)-\partial_{ij}u\cdot v\\
=&\partial_j(p_{il}v^l)-(\Gamma_{ij}^k\partial_ku+H_{ij})
\cdot(v^l\partial_lu+v^{n+\mu}N_\mu)\\
=&\partial_jv_i-\Gamma_{ij}^kv^lp_{kl}-v^{n+\mu}N_\mu\cdot H_{ij}\\
=&\partial_jv_i-\Gamma_{ij}^kv_k-H_{ij}^\mu v^{n+\mu}.
\end{align*}
It follows that (\ref{1.1}) has the form
\begin{equation}\label{1.9}
\partial_j v_{i}+\partial_iv_{j}-2\Gamma_{ij}^{k}v_{k}-2
H_{ij}^\mu v^{n+\mu}=f_{ij}\quad\text{for any }1\le i\le j\le n.
\end{equation}
Moreover this equation may be written invariantly as
\begin{equation*}
\nabla_{i}v_{j}+\nabla_{j}v_{i}-2H_{ij}^\mu v^{n+\mu}=f_{ij},
\end{equation*}
where $\nabla_{i}$ denotes covariant differentiation for 1-forms;
that is, if $\alpha=\alpha_{j}dx^{j}$ then
\begin{equation*}
\nabla_{i}\alpha=(\partial_i
\alpha_{j}-\alpha_{k}\Gamma_{ij}^{k})dx^{j}.
\end{equation*}

Clearly, solving (\ref{1.9}) for $\{v_{i}\}_{i=1}^{n}$ and
$\{v^{n+\mu}\}_{\mu=1}^{n(n-1)/2}$ is equivalent to solving
(\ref{1.1}). This will be accomplished by solving a linear system
of $n(n-1)/2$ algebraic equations for $\{v^{n+\mu}\}$ in terms of
$\{v_{i}\}$, and then inserting this solution into the remaining
$n$ equations to obtain a first order $n\times n$ differential
system in the unknowns $\{v_{i}\}$.

We now specify the algebraic equations used to obtain $\{
v^{n+\mu}\}$. An important observation here is that no derivatives
of $v^{n+\mu}$ are involved in (\ref{1.9}). Consider the
$n(n-1)/2$ equations corresponding to $i<j$ in (\ref{1.9})
\begin{equation}\label{1.10}
H_{ij}^{1} v^{n+1}+\cdots+H_{ij}^{n(n-1)/2}
v^{n+n(n-1)/2}=\phi_{ij} \quad\text{for any } 1\le i<j\le n,
\end{equation}
where
\begin{equation*}
\phi_{ij}=\frac{1}{2}\partial_j v_{i}+ \frac{1}{2}\partial_i
v_{j}-\Gamma_{ij}^{k}v_{k}-\frac{1}{2}f_{ij}\quad\text{for any } 1\le
i<j\leq n.
\end{equation*}
Let
\begin{equation}\label{eq-def$H$}
H(x)=\left(\begin{array}{cccc}
H_{12}^{1} & \cdot & \cdot & H_{12}^{n(n-1)/2} \\
 \cdot &  &  & \cdot \\
 \cdot &  &  & \cdot \\
H_{(n-1)n}^{1} & \cdot & \cdot & H_{(n-1)n}^{n(n-1)/2}
\end{array}\right)
\end{equation}
be the coefficient matrix on the left-hand side of (\ref{1.10}), and
assume that $H$ is invertible with the inverse
\begin{equation*}
H^{-1}=(H^{\mu\tau})\quad\text{for any }1\leq\mu,\tau\leq n(n-1)/2.
\end{equation*}
Note that this assumption of invertibility is not restrictive, since there always exists
a solution of the Gauss equations with this property (see Lemma 3.10 on page 98 of \cite{BCGGG}).
We now solve for $\{v^{n+\mu}\}$ from (\ref{1.10}) in terms of
$\{v_{i}\}$, $\{\nabla v_{i}\}$, and $\{f_{ij}\}$. With
$\tau_{ij}$ defined in the proof of Lemma \ref{lemma2.1}, we have
\begin{equation}\label{1.10a}
v^{n+\mu}=H^{\mu\tau_{ij}}
\big(\frac{1}{2}\partial_j v_{i}+ \frac{1}{2}\partial_i
v_{j}-\Gamma_{ij}^{k}v_{k}-\frac{1}{2}f_{ij}\big).
\end{equation}
We should emphasize that the summation on the right hand side is
taken over $1\le i<j\le n$.

There are $n$ equations in (\ref{1.9}) which are absent in
(\ref{1.10})
\begin{equation}\label{1.11}
\partial_i v_{i}
-\Gamma_{ii}^{k}v_{k}-H_{ii}^\mu v^{n+\mu}
=\frac{1}{2}f_{ii}\quad\text{for any }i=1,\ldots,n.
\end{equation}
By inserting (\ref{1.10a}) into
(\ref{1.11}), we obtain
$$\partial_iv_i-\frac{1}{2}H_{ii}^\mu H^{\mu\tau_{kl}}(\partial_l v_{k}+
\partial_k v_{l})-\big(\Gamma_{ii}^m-H_{ii}^\mu
H^{\mu\tau_{kl}}\Gamma_{kl}^{m}\big)v_m=\frac{1}{2}
\big(f_{ii}-H_{ii}^\mu H^{\mu\tau_{kl}}f_{kl}).$$ We emphasize
that the summation for $k,l$ is only taken for $1\le k<l\le n$,
since $\tau_{kl}$ is defined only for $1\le k<l\le n$. We now
write this as a first order $n\times n$ system for $V=(v_1,\cdots,
v_n)$ of the following form
\begin{equation}\label{1.12}
A^{1}(x)\partial_1V+\cdots+A^{n}(x)
\partial_nV+B(x)V=F(x),
\end{equation}
where $A^k(x)=(A^k_{ij}(x)), B(x)=
(B_{ij}(x))$ and $F(x)$ are given by
\begin{eqnarray*}
A^{k}_{ij}(x)&=&\begin{cases} -H^\mu_{ii}H^{\mu \tau_{jk}}/2&\text{for
}j<k,\\ \delta_{ik} &\text{for }j=k,\\
-H^{\mu}_{ii}H^{\mu\tau_{kj}}/2 &\text{for }j>k,\end{cases}
\\
B_{ij}(x)&=&- \Gamma_{ii}^{j}+\sum_{1\leq k<l\leq
n}H_{ii}^{\mu}H^{\mu
\tau_{kl}}\Gamma_{kl}^{j},\\
F_i(x)&=& \frac{1}{2}\left(f_{ii}-\sum_{1\leq k<l\leq
n}H_{ii}^{\mu}H^{\mu \tau_{kl}}f_{kl}\right).
\end{eqnarray*}
Here $i$ and $j$ denote the rows and columns. Note that the
Christoffel symbols are from the metric induced by $u$, and not
from the given metric $g$. It is now apparent that in order to
solve (\ref{1.1}), it is sufficient to solve (\ref{1.12}) for
$\{v_{i}\}$ and then to find $\{v^{n+\mu}\}$ in terms of
$\{v_{i}\}$, $\{\nabla v_{i}\}$ and $\{f_{ij}\}$ from
(\ref{1.10a}). Therefore the study of the linearization for
(\ref{0.2}) is now reduced to a study of the $n\times n$ system
(\ref{1.12}), as long as the matrix $H$ in (\ref{eq-def$H$}) is
invertible.

We emphasize that, in the calculations so far, $u$ is taken to be
an arbitrary embedding which is not necessarily related to $g$. In
the following, we will choose a special $u$ (the approximate
solution) so that the coefficient matrices of (\ref{1.12}), when
evaluated at $x=0$, are related to the curvature tensor of $g$ at
$x=0$.

To proceed, we let $\{h_{ij}^\mu\}$ be constants satisfying
(\ref{2.2}). Here we emphasize that $R_{ijkl}(0)$ are the
components of the curvature tensor of $g$ at $x=0$.  We then
assume that
$$\text{$u$ {\it is the approximate embedding of} $g$ {\it constructed in Lemma \ref{lemma2.1}}.}$$
By checking our calculations, it is clear that $H$ in
(\ref{eq-def$H$}) satisfies
\begin{equation}\label{eq-def$H$1}
H(0)=\left(\begin{array}{cccc}
h_{12}^{1} & \cdot & \cdot & h_{12}^{n(n-1)/2} \\
 \cdot &  &  & \cdot \\
 \cdot &  &  & \cdot \\
h_{(n-1)n}^{1} & \cdot & \cdot & h_{(n-1)n}^{n(n-1)/2}
\end{array}\right).
\end{equation}
Let $h_{ij}$ be the vector in $\mathbb{R}^{n(n-1)/2}$ defined by
$$h_{ij}=(h_{ij}^1, \cdots, h_{ij}^{n(n-1)/2}).$$
We require that
\begin{equation}\label{2.10}\{h_{ij}\}_{1\le i<j\le n} \text{ forms a basis of }
\mathbb{R}^{n(n-1)/2}.\end{equation}
Then $H(0)$ is invertible and so is $H(x)$ in (\ref{eq-def$H$})
for $x$ sufficiently small. By (\ref{2.10}), we set
\begin{equation}\label{2.10a}
h_{kk}=-2\sum_{1\le i<j\le n}^{n}c_{k}^{ij}
h_{ij}\quad\text{for any }1\leq k\leq
n,
\end{equation}
for some constants $c_{k}^{ij}$.
Therefore
$$A^k_{ij}(0)=\begin{cases} c_{i}^{jk}&\text{for
}j<k,\\ \delta_{ik} &\text{for }j=k,\\
c_{i}^{kj} &\text{for }j>k,\end{cases}$$
and
\begin{eqnarray*}
B_{ij}(0)&=&- \Gamma_{ii}^{j}(0)+\sum_{1\leq k<l\leq
n}c_{i}^{kl}\Gamma_{kl}^{j}(0),\\
F_i(0)&=& \frac{1}{2}\left(f_{ii}(0)-\sum_{1\leq k<l\leq
n}c_{i}^{kl}f_{kl}(0)\right),
\end{eqnarray*}
where $\Gamma_{ij}^k(0)$ are Christoffel symbols of $g$ at $x=0$.
Hence $B_{ij}(0)=0$ by (\ref{2.1}). Lastly, we point out that coefficient
matrices $A^1(0),\ldots,A^n(0)$ in (\ref{1.12}) are related to the curvature
tensor of $g$ at $x=0$. In fact, $\{h_{ij}^\mu\}$ defined by (\ref{2.10}) and
(\ref{2.10a}) satisfies (\ref{2.2}).

To summarize, for $i,j,k=1,\ldots,n$
For $i,j,k=1,\ldots,n$,
let ${\bf c}=\{c_{i}^{kj}\}_{k\neq j}$ be a collection of parameters with
\begin{equation}\label{parameters}c_{i}^{kj}=c_{i}^{jk}\quad\text{for any }
i, j, k \text{ with }j\neq k,
\end{equation}
and set
\begin{align}\label{parameters1}\begin{split}
&c_{i}^{ii}=1\quad\text{for any }i,\\
&c_{i}^{jj}=0\quad\text{for any }i\neq j.
\end{split}
\end{align}
Define $n\times n$ matrices $A^1, \ldots, A^n$ by
$$(A^k)_{ij}=(c_{i}^{kj}).$$ Then we have shown

\begin{lemma}\label{lemma-linearization}
The linear system (\ref{1.12})
at $x=0$ is given by
\begin{equation}\label{1.12z}
A^1\partial_{1}v+
\cdots+
A^n\partial_{n}v=F.
\end{equation}
\end{lemma}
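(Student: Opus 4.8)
The statement is essentially a packaging of the computations already carried out in this section: the plan is to rerun those computations at $x=0$ and check two things, that the three-case description of the constant matrices $A^k(0)$ collapses to the single formula $(A^k)_{ij}=c_i^{kj}$, and that the zeroth-order term drops out. First I would fix the data. Let $\{h_{ij}^\mu\}$ be a solution of the Gauss equations (\ref{2.2}) for which $\{h_{ij}\}_{1\le i<j\le n}$ is a basis of $\mathbb R^{n(n-1)/2}$; such a solution exists by Lemma 3.10 of \cite{BCGGG}. Take $u$ to be the approximate embedding of $g$ furnished by Lemma \ref{lemma2.1} for these constants, written in normal coordinates for $g$. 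Since $\{h_{ij}\}_{i<j}$ is a basis, each diagonal vector $h_{kk}$ has a unique expansion in it; this defines the constants $c_k^{ij}$ for $1\le i<j\le n$ through (\ref{2.10a}). Extend $\bf c$ to all triples by the symmetry (\ref{parameters}) and the normalizations (\ref{parameters1}), and let $A^1,\dots,A^n$ be the constant matrices $(A^k)_{ij}=c_i^{kj}$.

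Next I would evaluate the coefficients of the first-order system (\ref{1.12}), which was derived for an arbitrary embedding with $H$ invertible, at the origin. By (\ref{2.3}), $H_{ij}^\mu(0)=h_{ij}^\mu$, so $H(0)$ is the matrix displayed in (\ref{eq-def$H$1}), which is invertible by (\ref{2.10}); write $(H^{\mu\tau})=H(0)^{-1}$. For $j<k$ the definition gives $A^k_{ij}(0)=-\tfrac12 H_{ii}^\mu(0)H^{\mu\tau_{jk}}=-\tfrac12 h_{ii}^\mu H^{\mu\tau_{jk}}$; substituting $-\tfrac12 h_{ii}^\mu=\sum_{p<q}c_i^{pq}h_{pq}^\mu$ from (\ref{2.10a}) and using that $\sum_\mu h_{pq}^\mu H^{\mu\tau_{jk}}$ is row $\tau_{pq}$ of $H(0)$ paired against column $\tau_{jk}$ of its inverse, hence $\delta_{\tau_{pq},\tau_{jk}}$, together with the injectivity of $(p,q)\mapsto\tau_{pq}$ on pairs $p<q$, one obtains $A^k_{ij}(0)=c_i^{jk}$. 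The case $j>k$ is identical and gives $c_i^{kj}$, while for $j=k$ the definition gives $\delta_{ik}$ directly. The symmetry $c_i^{jk}=c_i^{kj}$ and the conventions $c_i^{ii}=1$, $c_i^{jj}=0$ for $i\neq j$ are precisely what is needed to merge these three cases into $A^k_{ij}(0)=c_i^{kj}=(A^k)_{ij}$. For the zeroth-order term, $B_{ij}(0)=-\Gamma_{ii}^j(0)+\sum_{k<l}c_i^{kl}\Gamma_{kl}^j(0)$, where the Christoffel symbols are those of the metric $p_{ij}$ induced by $u$; since $p_{ij}$ agrees with $g$ to first order at the origin (Lemma \ref{lemma2.1} together with (\ref{2.1})), all of these Christoffel symbols vanish at $0$, so $B(0)=0$. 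Substituting $A^k(0)=A^k$ and $B(0)=0$ into (\ref{1.12}) at $x=0$ then yields exactly (\ref{1.12z}), with $F=F(0)$.

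There is no genuine obstacle in this argument; it is bookkeeping. The one point that takes a moment of care is the orthogonality relation $\sum_\mu h_{pq}^\mu H^{\mu\tau_{jk}}=\delta_{\tau_{pq},\tau_{jk}}$, which is what makes the expansion (\ref{2.10a}) pick out a single Kronecker term from the sum over $p<q$, and the accompanying observation that the diagonal and symmetry conventions imposed on $\bf c$ are tailored exactly so as to absorb the $j=k$ entry and to identify the $j<k$ and $j>k$ branches of the formula for $A^k_{ij}(0)$.
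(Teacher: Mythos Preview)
Your proposal is correct and follows essentially the same route as the paper: the lemma is a summary of the preceding derivation in Section~\ref{section-reduction}, and your write-up simply fills in the two computations the paper leaves implicit, namely the identity $\sum_\mu h_{pq}^\mu H^{\mu\tau_{jk}}=\delta_{\tau_{pq},\tau_{jk}}$ that reduces $A^k_{ij}(0)$ to a single $c_i^{jk}$, and the vanishing of $B(0)$ via $\Gamma_{ij}^k(0)=0$ (which the paper records directly in (\ref{2.3})). There is no substantive difference in method.
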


\begin{remark}
The matrices defined above in fact form a basis of the set $F_H$
defined on page 916 in \cite{BGY}, if the second fundamental form $\{h_{ij}^\mu\}$ at $x=0$ is given by (\ref{2.10}) and
(\ref{2.10a}).
\end{remark}

For each $\xi=(\xi_1,\ldots,
\xi_n)\in\mathbb R^n$, define
$$P=P(\xi,{\bf c})=\sum_{i=1}^n\xi_kA^k.$$
This is the {\it principal symbol}, whose components have expressions
\begin{align}\label{eq-principal}
\begin{split}
p_{ii}=&\xi_i+\sum_{k\neq i}c_{i}^{ki}\xi_{k},\\
p_{ij}=&\sum_{k\neq j}c_{i}^{kj}\xi_k\quad\text{for any }i\neq j.
\end{split}
\end{align}
The {\it characteristic variety} is defined by
$$\Sigma({\bf c})=\{\xi\in\mathbb R^n\setminus\{0\}\mid \det P(\xi, {\bf c})=0\}.$$
The next well-known fact asserts that the isometric embedding system is never elliptic
beyond dimension two.

\begin{lemma}\label{lemma-char.variety}
For $n\ge 3$ and any $\bf c$, $\Sigma({\bf c})\neq\emptyset$.
\end{lemma}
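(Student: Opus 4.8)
My plan is to recognize the principal symbol $P(\xi)=\sum_{k}\xi_{k}A^{k}$ as a \emph{symmetric} bilinear map on $\mathbb R^{n}$ and then to invoke the classical theorem of Hopf on commutative real division algebras; this also makes transparent why the statement fails in dimension $2$. The first and only substantive step is the structural identity
\[
P(\xi)\lambda \;=\; P(\lambda)\xi \qquad\text{for all }\xi,\lambda\in\mathbb R^{n},
\]
which follows at once from the symmetry $c_{i}^{kj}=c_{i}^{jk}$: letting $C^{i}$ be the symmetric matrix with entries $(C^{i})_{kj}=c_{i}^{kj}$, the $i$-th row of $P(\xi)$ is $(C^{i}\xi)^{\top}$, so $(P(\xi)\lambda)_{i}=\xi^{\top}C^{i}\lambda=\lambda^{\top}C^{i}\xi=(P(\lambda)\xi)_{i}$. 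Hence the bilinear map $B(\xi,\lambda):=P(\xi)\lambda$ on $\mathbb R^{n}$ is commutative.

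Now I would argue by contradiction: assume $\Sigma({\bf c})=\emptyset$, so that $\det P(\xi)\neq 0$ for every $\xi\neq 0$. Then $P(\xi)$ is invertible whenever $\xi\neq 0$, so $B(\xi,\lambda)=0$ with $\xi\neq 0$ forces $\lambda=0$, and by commutativity it likewise forces $\xi=0$ when $\lambda\neq 0$; thus $B$ has no zero divisors. Consequently $(\mathbb R^{n},B)$ is a finite-dimensional commutative (not necessarily associative or unital) division algebra over $\mathbb R$, so by Hopf's theorem $n\in\{1,2\}$, contradicting $n\ge 3$. Therefore $\Sigma({\bf c})\neq\emptyset$.

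The crux is thus the identity $P(\xi)\lambda=P(\lambda)\xi$ — equivalently, the fact that $P(\xi)$ has $(i,j)$ entry $\xi^{\top}C^{i}e_{j}$ with each $C^{i}$ symmetric — which is precisely where the prescribed form $(A^{k})_{ij}=c_{i}^{kj}$ together with $c_{i}^{kj}=c_{i}^{jk}$ is used; everything after it is formal. I would also remark that the dimension bound in Hopf's theorem is sharp and matches the classical two-dimensional picture: for surfaces of positive Gauss curvature the linearized system is elliptic, i.e.\ $\Sigma=\emptyset$, corresponding to the commutative division algebra $\mathbb C$ on $\mathbb R^{2}$. (When $n$ is odd one can alternatively note that $\det P(\cdot)$ is a homogeneous polynomial of odd degree, hence changes sign on $\mathbb R^{n}\setminus\{0\}$ and must vanish somewhere; but the division-algebra argument handles all $n\ge 3$ at once.)
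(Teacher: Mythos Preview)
Your proof is correct and takes a genuinely different route from the paper's. The paper's argument is essentially a citation: it refers to Theorem~B~(v) of Bryant--Griffiths--Yang and then remarks that, since the diagonal entries force $A^{1},\dots,A^{n}$ to be linearly independent, the existence of a characteristic direction follows from the Adams--Lax--Phillips theorem on real matrix pencils whose nonzero linear combinations are all nonsingular. By contrast, you exploit the specific structure of the symbol: the symmetry $c_{i}^{kj}=c_{i}^{jk}$ gives the identity $P(\xi)\lambda=P(\lambda)\xi$, so the bilinear product $B(\xi,\lambda)=P(\xi)\lambda$ is \emph{commutative}, and emptiness of $\Sigma({\bf c})$ would make $(\mathbb R^{n},B)$ a commutative real division algebra, which Hopf's theorem forbids for $n\ge 3$.

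What your approach buys: it is self-contained and more elementary, since Hopf's theorem for \emph{commutative} division algebras is considerably easier than the full Adams--Lax--Phillips/Radon--Hurwitz machinery; it makes transparent exactly which structural feature of the symbol (the symmetry of the $C^{i}$) is responsible for the conclusion; and it neatly explains the borderline case $n=2$ via the identification with $\mathbb C$. What the paper's approach buys is brevity---it dispatches the lemma in two lines by pointing to the literature---though one might note that the bare Adams--Lax--Phillips bound $\rho(n)$ on the size of a nonsingular pencil does not by itself exclude $n=4$ or $n=8$ without further input, so your use of commutativity is doing real work that the paper's sketch leaves implicit in the cited references.
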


This is the second result of Theorem B (v) in \cite{BGY}. In the
present setting, the proof becomes straightforward. Namely, we observe that
$A^{1},\cdots, A^{n}$ are linearly independent by examining the
diagonal elements. Thus the existence of characteristics follows
immediately from \cite{LaxEtAl}.



To end this section, we briefly discuss the principal symbol and characteristic variety for low dimensions. For dimension $n=3$ we
refer the reader to \cite{BGY}. In the case of dimension $n=4$, we will show that the characteristic variety is smooth under generic
conditions on small parameters. To this end, consider the condition
\begin{equation}\label{condition1}
c_{j}^{ik}c_{i}^{jl}\neq c_{j}^{il}c_{i}^{jk}\text{ }\text{ }\text{ for all }\text{ }\text{ }i\neq j,
\end{equation}
where $k$ and $l$ are the remaining two elements of the set $\{1,2,3,4\}\setminus\{i,j\}$, and
consider the four inequalities
\begin{align}\label{condition2}
\begin{split}
c_{2}^{14}c_{4}^{13}c_{3}^{12}\neq& c_{4}^{12}c_{3}^{14}c_{2}^{13},\\
c_{4}^{21}c_{3}^{24}c_{1}^{23}\neq& c_{1}^{24}c_{4}^{23}c_{3}^{21},\\
c_{4}^{31}c_{2}^{34}c_{1}^{32}\neq& c_{1}^{34}c_{4}^{32}c_{2}^{31},\\
c_{2}^{41}c_{1}^{43}c_{3}^{42}\neq& c_{1}^{42}c_{3}^{41}c_{2}^{43}.
\end{split}
\end{align}
These conditions arise naturally when examining the characteristic variety.
In the next result, we write
$t{\bf c}=\{tc_{i}^{kj}\}_{k\neq j}$ for ${\bf c}=\{c_{i}^{kj}\}_{k\neq j}$.

\begin{theorem}\label{theoremdim4}
Let $n=4$. If all elements of ${\bf c}$ satisfy (\ref{condition1})
and (\ref{condition2}), then there exists a constant
$T>0$ (depending on $\bf c$) such that for all $t\in (0,T)$ the
characteristic variety $\Sigma(t{\bf c})$ is smooth.
\end{theorem}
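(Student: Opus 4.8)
The plan is to view $\Sigma(t{\bf c})$, for small $t$, as a perturbation of the reducible variety $\Sigma(0)=\{\xi_1\xi_2\xi_3\xi_4=0\}$ and to check that the perturbation smooths out each of its singular strata, the genericity conditions (\ref{condition1}) and (\ref{condition2}) being precisely what guarantees this. Since $\det P(\xi,t{\bf c})$ is homogeneous of degree $4$ in $\xi$, Euler's identity gives $\nabla_\xi\det P=0\Rightarrow\det P=0$, so $\Sigma(t{\bf c})$ is smooth in $\mathbb{R}^4\setminus\{0\}$ exactly when $\nabla_\xi\det P(\cdot,t{\bf c})$ has no zero on $\mathbb{S}^3$. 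By Lemma \ref{lemma-linearization} and (\ref{eq-principal}) we may write $P(\xi,t{\bf c})=\mathrm{diag}(\xi_1,\xi_2,\xi_3,\xi_4)+tG(\xi)$, where $G(\xi)=\sum_k\xi_kE^k$ is linear in $\xi$ with coefficients linear in ${\bf c}$; at $t=0$ one has $\det P(\xi,0)=\xi_1\xi_2\xi_3\xi_4$, whose gradient vanishes exactly on the union $\bigcup_{i<j}\{\xi_i=\xi_j=0\}$ of the six coordinate $2$-planes. A standard compactness argument then reduces the theorem to the following statement: if $t_m\downarrow 0$ and $\xi_m\in\mathbb{S}^3$ satisfy $\nabla_\xi\det P(\xi_m,t_m{\bf c})=0$, then the limit $\xi_0$ cannot have two or more vanishing coordinates. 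Indeed, if $\xi_0$ had a single vanishing coordinate, say $\xi_1^0=0$, then $\partial_{\xi_1}\det P(\xi_0,0)=\xi_2^0\xi_3^0\xi_4^0\neq 0$, which by continuity in $(\xi,t)$ persists near $(\xi_0,0)$, so no such $\xi_m$ can accumulate there. This leaves two cases: $\xi_0$ has exactly two, or exactly three, vanishing coordinates.

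Suppose first $\xi_0=(0,0,\xi_3^0,\xi_4^0)$ with $\xi_3^0\xi_4^0\neq 0$, the generic situation on one of the six $2$-planes. Near $(\xi_0,0)$ the $\{3,4\}$-block of $P$ is invertible, so the Schur complement reduces the pair of conditions $\det P=0$, $\nabla_\xi\det P=0$ to the vanishing of a $2\times 2$ determinant $\det S$ together with all of its first partial derivatives. An inspection of the lowest-order terms shows these conditions force $\xi_1,\xi_2$ to have size $O(t)$; after rescaling $\xi_1=t\sigma_1$, $\xi_2=t\sigma_2$ and letting $t\to 0$, one obtains a simultaneous zero, in the variables $(\sigma_1,\sigma_2,\xi_3,\xi_4)$, of the quadratic
\[
Q_0=(\sigma_1+a_{11})(\sigma_2+a_{22})-a_{12}a_{21}
\]
and of $\partial_{\sigma_1}Q_0,\partial_{\sigma_2}Q_0,\partial_{\xi_3}Q_0,\partial_{\xi_4}Q_0$, where the $a_{ij}=a_{ij}(\xi_3,\xi_4)$ are explicit linear forms read off from $G$, for instance $a_{12}=c_1^{23}\xi_3+c_1^{24}\xi_4$ and $a_{21}=c_2^{13}\xi_3+c_2^{14}\xi_4$. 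Condition (\ref{condition1}) for the pair $(1,2)$ says exactly that the $2\times 2$ matrix of coefficients of $(a_{12},a_{21})$ is nonsingular, so $a_{12}$ and $a_{21}$ cannot be proportional and at most one of them can vanish at $\xi_0$. The equations $\partial_{\sigma_1}Q_0=\partial_{\sigma_2}Q_0=0$ pin $\sigma_1=-a_{11}$, $\sigma_2=-a_{22}$, whence $Q_0=-a_{12}a_{21}$; if neither $a_{12}(\xi_0)$ nor $a_{21}(\xi_0)$ vanishes this contradicts $Q_0=0$, while if, say, $a_{12}(\xi_0)=0$ then $a_{21}(\xi_0)\neq 0$ and $\partial_{\xi_3}Q_0=\partial_{\xi_4}Q_0=0$ force both coefficients of $a_{12}$ to vanish, i.e. $a_{12}\equiv 0$, again contradicting (\ref{condition1}). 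The other five $2$-planes are treated in exactly the same way and give the remaining instances of (\ref{condition1}).

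Now suppose $\xi_0=\pm(1,0,0,0)$, a vertex of the configuration. Near $(\xi_0,0)$ the entry $p_{11}$ is bounded away from $0$, and the Schur complement now reduces the problem to the vanishing of a $3\times 3$ determinant together with its gradient. Again the three small coordinates are forced to be $O(t)$; rescaling and letting $t\to 0$ produces a simultaneous real zero of
\[
R=\det\bigl(\mathrm{diag}(\sigma_2,\sigma_3,\sigma_4)+M\bigr)
\]
and of its gradient, where $M$ is the $3\times 3$ matrix with $M_{ij}=c_i^{1j}$ for $i,j\in\{2,3,4\}$. Writing $x=\sigma_2+M_{22}$, $y=\sigma_3+M_{33}$, $z=\sigma_4+M_{44}$, the gradient equations become $yz=M_{34}M_{43}$, $xz=M_{24}M_{42}$, $xy=M_{23}M_{32}$, while expanding $R=0$ and substituting these relations yields $2xyz=M_{23}M_{34}M_{42}+M_{24}M_{32}M_{43}$. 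Since $(xyz)^2$ equals the product of the three right-hand sides of the gradient equations, namely $(M_{23}M_{34}M_{42})(M_{24}M_{32}M_{43})$, a common zero forces $(M_{23}M_{34}M_{42}-M_{24}M_{32}M_{43})^2=0$, i.e. $c_2^{13}c_3^{14}c_4^{12}=c_2^{14}c_3^{12}c_4^{13}$ — precisely the negation of the first inequality of (\ref{condition2}). The other three vertices give the remaining three inequalities of (\ref{condition2}). The degenerate sub-cases, in which some off-diagonal entry of $M$ vanishes, are disposed of by a short case analysis invoking both (\ref{condition1}) and (\ref{condition2}); and the remaining possibility — a singular point tending to a vertex while its three small coordinates go to $0$ at different rates — corresponds to the behaviour of $R$ as one of the $\sigma$-variables tends to infinity, where the leading part of $R$ and $\nabla R$ is exactly the $2\times 2$ model of the previous paragraph and is therefore excluded by (\ref{condition1}). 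The smallness of ${\bf c}$ is used only to keep the inverted Schur blocks uniformly nonsingular and to ensure that the finitely many rescaled limiting models above account for every zero of $\nabla_\xi\det P$ on $\mathbb{S}^3$.

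The step I expect to be the main obstacle is not any individual computation — the algebra inside each model is short, as indicated above — but the global bookkeeping of scales: organizing the compactness argument so that every zero of $\nabla_\xi\det P$ on $\mathbb{S}^3$, for every small $t$, is captured by one of the six quadratic models attached to the $2$-planes or one of the four cubic models attached to the vertices, and verifying that no singular point slips ``between'' these scaling regimes. The degenerate sub-cases of the $3\times 3$ model also require some care, though they are routine.
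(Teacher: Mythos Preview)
Your approach is correct in outline and genuinely different from the paper's. The paper does not work with the gradient of $\det P$ at all; instead it invokes the Bryant--Griffiths--Yang result (stated later as Lemma~\ref{MainLemma}) that the singular locus of $\Sigma$ is exactly the set where $\operatorname{rank}P\le n-2$, i.e.\ where every $3\times 3$ minor of $P$ vanishes. It then argues by contradiction: assuming singular points $\xi(t)\to a$, the vanishing of the four diagonal minors $\det P^i_i=\prod_{j\neq i}\xi_j+O(t)$ forces at least two $a_i$ to vanish; in the case $a_1=a_2=0$, $a_3a_4\neq 0$, the vanishing of the off-diagonal minors $\det P^1_2,\det P^2_1$ gives directly $b_{12}\to 0$, $b_{21}\to 0$, hence two linear relations on $(a_3,a_4)$ whose compatibility is precisely the negation of (\ref{condition1}); in the vertex case $a_1=a_2=a_3=0$, $a_4\neq 0$, the paper computes the minors $\det P^2_3,\det P^3_2$ to one further order in $t$, eliminates $\xi_1$ between them, and reads off the negation of one line of (\ref{condition2}). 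No Schur complement, no rescaled model problems, no multi-scale bookkeeping is needed: the minors already isolate the relevant algebraic relations, and the argument is about half a page.

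What your route buys is independence from Lemma~\ref{MainLemma}: you characterize singular points by $\nabla_\xi\det P=0$ (with $\det P=0$ following from Euler), which is elementary, and your cubic/quadratic limiting models give a conceptual explanation of why exactly the inequalities (\ref{condition1}) and (\ref{condition2}) arise --- as nondegeneracy of a $2\times 2$ coefficient matrix and as the discriminant-type obstruction $(\alpha-\beta)^2=0$ for the $3\times 3$ model, respectively. The price is precisely the issue you flag at the end: you must rigorously show that every singular point on $\mathbb{S}^3$ for small $t$ is captured by one of finitely many rescaled models, and handle the intermediate scales (e.g.\ $\xi_i\to 0$ but $\xi_i/t\to\infty$) and the degenerate sub-cases of the cubic model. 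These are doable but add length. The paper's minor-based argument avoids this entirely because the vanishing of a well-chosen pair of minors already encodes the needed information at each scale, without having to first pin down the rate at which each coordinate tends to zero.
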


A general result was obtained in \cite{BGY} without the smallness assumption on
the parameters. Here we give an alternative proof.

\begin{proof}
As is discussed in the next section, Bryant, Griffiths, and Yang have shown \cite{BGY}
that the singular part of the characteristic variety consists of points $\xi\in\mathbb{S}^{3}
\subset\mathbb{R}^{4}$ at which all $3\times 3$ determinant minors of the principal symbol vanish. (See Lemma \ref{MainLemma}.)
The principal symbol $P(\x, t{\bf c})=(p_{ij})$ is given by
\begin{align*}
\begin{split}
p_{ii}=&\xi_i+t\sum_{k\neq i}c_{i}^{ki}\xi_{k},\\
p_{ij}=&t\sum_{k\neq j}c_{i}^{kj}\xi_{k},\text{ }\text{ for any }i\neq j.
\end{split}
\end{align*}
Suppose that for all sufficiently small $t$,
singular points $\xi(t)\in\mathbb{S}^{3}$ exist.
In other words, all $3\times 3$ determinant minors of the
principal symbol vanish on $\xi(t)$.
By passing to a subsequence if necessary, we may
assume
$$\xi(t)\rightarrow a=(a_{1},a_{2},a_{3},a_{4})\quad\text{as }t\rightarrow 0.$$ If $P^{i}_{j}$ denotes
the $3\times 3$ minor obtained by deleting the $i$th column and $j$th row, then a simple calculation
shows that
\begin{align*}
\det P^{1}_{1}=& \xi_{2}\xi_{3}\xi_{4}+O(t),\\
\det P^{2}_{2}=& \xi_{1}\xi_{3}\xi_{4}+O(t),\\
\det P^{3}_{3}=& \xi_{1}\xi_{2}\xi_{4}+O(t),\\
\det P^{4}_{4}=& \xi_{1}\xi_{2}\xi_{3}+O(t),
\end{align*}
where we have dropped (and will continue to drop) reference to $t$. Thus at least two components
$a_{i}$ must be zero, say $a_{1}=a_{2}=0$. We may assume that $a_{4}\neq 0$. There are then two cases
to consider, $a_{3}\neq 0$ and $a_{3}=0$.\medskip

\textit{Case 1: $a_{3}\neq 0$.}\medskip

For each $i\neq j$ the components of the principal symbol are given by $p_{ij}=tb_{ij}$, where
\begin{equation*}
b_{ij}=\sum_{k\neq j}c_{i}^{kj}\xi_{k}.
\end{equation*}
Then observe that
\begin{equation*}
\det P^{1}_{2}= t(\xi_{3}\xi_{4}b_{21}+O(t)),\text{ }\text{ }\text{ }\text{ }
\det P^{2}_{1}= t(\xi_{3}\xi_{4}b_{12}+O(t)).
\end{equation*}
It follows that $b_{12}\rightarrow 0$ and $b_{21}\rightarrow 0$ as $t\rightarrow 0$. Thus
\begin{equation*}
c_{2}^{13}a_{3}+c_{2}^{14}a_{4}=0,\text{ }\text{ }\text{ }\text{ }
c_{1}^{23}a_{3}+c_{1}^{24}a_{4}=0,
\end{equation*}
where we have used the symmetry $c_{i}^{kj}=c_{i}^{jk}$. As $a_{3}a_{4}\neq 0$, we must have
\begin{equation}\label{relation1}
c_{2}^{13}c_{1}^{24}=c_{1}^{23}c_{2}^{14}.
\end{equation}

\textit{Case 2: $a_{3}=0$.}\medskip

Observe that
\begin{align*}
\det P^{2}_{3}=& t\xi_{1}\xi_{4}b_{23}+t^{2}(b_{11}b_{23}-b_{31}b_{12})\xi_{4}
+O(t^{2}\max\{|\xi_{1}|,|t|\}),\\
\det P^{3}_{2}=& t\xi_{1}\xi_{4}b_{32}+t^{2}(b_{11}b_{32}-b_{31}b_{12})\xi_{4}
+O(t^{2}\max\{|\xi_{1}|,|t|\}).
\end{align*}
If both of these determinants are zero, then we may multiply the first by $b_{32}$ and the second by $b_{23}$,
and then compare the expressions for $\xi_{1}\xi_{4}b_{23}b_{32}$ to obtain
\begin{equation*}
(b_{21}b_{13}-b_{11}b_{23})b_{32}=(b_{12}b_{31}-b_{11}b_{32})b_{23}+O(t).
\end{equation*}
Recognizing that $\lim_{t\rightarrow 0}b_{ij}=c_{i}^{4j}\xi_{4}$, we find that
\begin{equation}\label{relation2}
c_{2}^{41}c_{1}^{43}c_{3}^{42}= c_{1}^{42}c_{3}^{41}c_{2}^{43}.
\end{equation}

Thus if neither (\ref{relation1}) nor (\ref{relation2}) holds, then there cannot be a limit
point $a=\lim_{t\rightarrow 0}\xi(t)$ of singular points of the characteristic variety with
$a_{1}=a_{2}=0$. By considering all combinations $a_{i}=a_{j}=0$ with $i\neq j$, we obtain the
desired result.
\end{proof}

\section{The Characteristic Variety in Higher Dimensions}\label{section-HighDim}

In this section, we study the characteristic variety of the
linearized isometric embedding system in higher dimensions and
prove Theorem \ref{theorem-nonempty}. As in Section \ref{section-reduction},
let ${\bf c}=\{c_{i}^{kj}\}_{k\neq j}$ be a collection of parameters satisfying (\ref{parameters}).
For any $\xi=(\xi_1,\cdots,\xi_n)\in\mathbb R^n$, we define an $n\times n$ matrix
$P=P(\xi,{\bf c})=(p_{ij})$ by
\begin{align}\label{eq-CharVar}
\begin{split}
p_{ii}=&\xi_i+\sum_{k\neq i}c_{i}^{ki}\xi_{k},\\
p_{ij}=&\sum_{k\neq j}c_{i}^{kj}\xi_k\quad\text{for any }i\neq j.
\end{split}
\end{align}
The matrix $P$ is the principal symbol associated with the
equivalent linearized isometric embedding system. Then the
characteristic variety $\Sigma=\Sigma({\bf c})$ is given by
$$\Sigma({\bf c})=\{\xi\in\mathbb R^n\setminus\{0\}\mid \det P(\xi, {\bf c})=0\}.$$
We observe that $\Sigma({\bf c})$ can be defined alternatively as
$$\Sigma({\bf c})=\{\xi\in\mathbb R^n\setminus\{0\}\mid \text{ the rank of } P(\xi, {\bf c})\le n-1\}.$$
Next define
$$\Sigma_{\text{sing}}({\bf c})=\{\xi\in\mathbb R^n\setminus\{0\}\mid \text{ the rank of } P(\xi, {\bf c})\le n-2\}.$$
We recall the following result.

\begin{lemma}\label{MainLemma}$ \Sigma_{\text{sing}}({\bf c})$ is the
singular part of the characteristic variety $\Sigma({\bf c})$.\end{lemma}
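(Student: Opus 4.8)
The plan is to prove that $\Sigma_{\text{sing}}({\bf c})$, defined as the locus where $\mathrm{rank}\,P(\xi,{\bf c})\le n-2$, coincides with the set of points of $\Sigma({\bf c})$ at which $\Sigma$ fails to be a smooth hypersurface. Since $\Sigma({\bf c})$ is the zero set of the single homogeneous polynomial $Q(\xi):=\det P(\xi,{\bf c})$, the singular part of $\Sigma$ is (by the Jacobian criterion) the set of $\xi\in\mathbb R^n\setminus\{0\}$ with $Q(\xi)=0$ and $\nabla_\xi Q(\xi)=0$. So the content of the lemma is the identity
\begin{equation*}
\{Q=0,\ \nabla Q=0\}=\{\mathrm{rank}\,P\le n-2\}
\end{equation*}
on $\mathbb R^n\setminus\{0\}$. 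The key tool is Jacobi's formula for the derivative of a determinant: $\partial_{\xi_k}Q(\xi)=\mathrm{tr}\!\left(\mathrm{adj}(P(\xi))\,\partial_{\xi_k}P(\xi)\right)$, where $\mathrm{adj}(P)$ is the classical adjugate (transpose of the cofactor matrix). Equivalently, writing $P$ in terms of its entries and using that the cofactors are exactly the signed $(n-1)\times(n-1)$ minors, each partial derivative $\partial_{\xi_k}Q$ is a \emph{linear} combination (with coefficients the constant entries of the $A^k$) of those minors.

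First I would handle the inclusion $\{\mathrm{rank}\,P\le n-2\}\subseteq\Sigma_{\text{sing}}$ of the \emph{analytic} singular locus: if $\mathrm{rank}\,P(\xi)\le n-2$ then every $(n-1)\times(n-1)$ minor of $P(\xi)$ vanishes, so $\mathrm{adj}(P(\xi))=0$, hence by Jacobi's formula $\nabla Q(\xi)=0$; and of course $Q(\xi)=\det P(\xi)=0$. Thus such $\xi$ is a singular point of the variety $\{Q=0\}$. For the reverse inclusion I would argue the contrapositive: suppose $\xi\in\Sigma({\bf c})$ but $\mathrm{rank}\,P(\xi)=n-1$ exactly; then $\mathrm{adj}(P(\xi))\ne 0$ (in fact it has rank $1$, being of the form $wz^T$ with $Pz^T$... i.e. with $w$ spanning the kernel of $P^T$ and $z$ spanning the kernel of $P$). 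I then need to show $\nabla Q(\xi)\ne 0$. Here is where the explicit structure of the symbol $P$ in \eqref{eq-principal} must be used: one computes $\partial_{\xi_k}Q=\mathrm{tr}(\mathrm{adj}(P)\,A^k)$ and must exhibit at least one index $k$ for which this is nonzero. Because the $A^k$ are linearly independent (as noted in the paragraph after Lemma \ref{lemma-char.variety}, their diagonals already distinguish them), and $\mathrm{adj}(P(\xi))$ is a fixed nonzero matrix, the linear functional $M\mapsto\mathrm{tr}(\mathrm{adj}(P(\xi))\,M)$ is not identically zero; the remaining point is to verify it does not happen to annihilate the particular $n$-dimensional span of the $A^k$ — equivalently, that $\mathrm{adj}(P(\xi))$ is not orthogonal (in the trace pairing) to $\mathrm{span}\{A^1,\dots,A^n\}$. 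I would establish this by noting that the $A^k$ have a lower/upper-triangular-plus-identity shape in their $k$-th column/row (entry $\delta_{ik}$ on the diagonal coming from $p_{ii}=\xi_i+\cdots$), which guarantees that the coefficient of $\xi_k$ in $Q$ is a genuine minor rather than an accidental cancellation — concretely, $\partial_{\xi_k}Q$ contains the principal $(n-1)\times(n-1)$ minor $\det P^{k}_{k}$ (delete row $k$, column $k$) plus terms linear in the off-diagonal parameters, so when $\mathrm{rank}\,P=n-1$ at least one such minor is nonzero and survives.

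The main obstacle is precisely this last step: ruling out the degenerate possibility that $\mathrm{rank}\,P(\xi)=n-1$ yet all partials $\partial_{\xi_k}Q(\xi)$ vanish simultaneously. Abstractly this is equivalent to $\mathrm{adj}(P(\xi))\perp\mathrm{span}\{A^1,\dots,A^n\}$ in the trace inner product, and a priori a rank-one matrix could be orthogonal to an $n$-dimensional subspace of the $n^2$-dimensional matrix space. The way I would close it is to use that $P(\xi)=\sum_k\xi_k A^k$ \emph{itself} lies in that span, so $\mathrm{tr}(\mathrm{adj}(P)\,P)=n\det P=nQ(\xi)=0$ gives no contradiction directly; instead I would differentiate the homogeneity relation $\sum_k\xi_k\partial_{\xi_k}Q=n\,Q$ and, more usefully, exploit that $\mathrm{adj}(P)=wz^T$ with $Pz^T=0$ and $w^TP=0$, so $\mathrm{tr}(\mathrm{adj}(P)A^k)=w^TA^kz^T$; vanishing for all $k$ would mean $w^T\!\left(\sum_k\eta_k A^k\right)\!z^T=0$ for all $\eta$, i.e. $w^TQ(\eta)z^T=0$ for the symbol at \emph{every} direction $\eta$, which forces $w$ or $z$ to be annihilated by the whole pencil — impossible since the pencil $\{P(\eta)\}$ is the span of the linearly independent $A^k$, whose diagonal entries alone show no common left/right null vector exists for a nonzero $w,z$. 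This is the crux and I expect the careful bookkeeping there, using the explicit form \eqref{eq-principal}, to be the real work; the rest is Jacobi's formula and the Jacobian criterion. (I note this lemma is quoted from \cite{BGY}, so the paper may simply cite it, but the above is how I would supply a self-contained argument.)
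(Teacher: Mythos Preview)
The paper does not supply its own proof of this lemma; immediately after the statement it writes ``Lemma \ref{MainLemma} is proved in \cite{BGY}. (See \cite{BGY}, Theorem B.)'' and moves on. So there is nothing to compare against beyond the citation, and your parenthetical suspicion is exactly right.

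Your outline is correct through the easy inclusion: if $\mathrm{rank}\,P(\xi)\le n-2$ then every $(n-1)\times(n-1)$ minor vanishes, $\mathrm{adj}\,P(\xi)=0$, and Jacobi's formula gives $\nabla Q(\xi)=0$. The gap is in the reverse inclusion. You correctly reduce to showing that when $\mathrm{rank}\,P(\xi)=n-1$, writing $\mathrm{adj}\,P(\xi)=uv^{T}$ with $Pu=0$ and $v^{T}P=0$, one has $v^{T}A^{k}u\ne 0$ for some $k$. But your closing claim---that $v^{T}P(\eta)u=0$ for all $\eta$ ``forces $w$ or $z$ to be annihilated by the whole pencil''---is false. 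That condition says only that the rank-one matrix $uv^{T}$ is trace-orthogonal to the $n$-dimensional span of the $A^{k}$ inside the $n^{2}$-dimensional matrix space; it does not imply $A^{k}u=0$ or $v^{T}A^{k}=0$ for any $k$. For a general pencil the implication you want can simply fail: with $n=2$ and $P(\xi)=\bigl(\begin{smallmatrix}\xi_{1}&0\\ \xi_{2}&\xi_{1}\end{smallmatrix}\bigr)$ one has $\det P=\xi_{1}^{2}$, and at $\xi=(0,1)$ the rank is $1=n-1$ while $\nabla\det P=0$. So the special structure \eqref{eq-principal} is essential to the lemma, not incidental bookkeeping.

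The symmetry $c_{i}^{kj}=c_{i}^{jk}$ does buy you the identity $P(\xi)\eta=P(\eta)\xi$, which recasts ``$v^{T}A^{k}u=0$ for all $k$'' as $v^{T}P(u)=0$; together with $P(\xi)u=0$ (hence $P(u)\xi=0$) and $v^{T}P(\xi)=0$ this is a clean reformulation, but deriving a contradiction from these three relations still requires the structure exploited in \cite{BGY} (ultimately the link to the second fundamental form), and the ``diagonal entries alone'' argument you sketch does not close it.
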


Lemma \ref{MainLemma} is proved in \cite{BGY}. (See \cite{BGY}, Theorem B.) It is
also proved
(\cite{BGY}, Corollary 1.c.6) that $\Sigma_{\text{sing}}({\bf c})$ is not
empty for $n=6, 10, 14, \ldots$ Furthermore it was conjectured (\cite{BGY}, page 920) that
$\Sigma_{\text{sing}}({\bf c})$ is not empty for any $n\ge 5$. The goal of this section is
to prove that $\Sigma_{\text{sing}}({\bf c})$ is not empty for
sufficiently small ${\bf c}$ if $n\ge 5$, and to estimate its size.

By Lemma \ref{MainLemma} $\Sigma_{\text{sing}}({\bf c})$ consists of those
points $\xi$ where all $(n-1)\times(n-1)$ minors of $P(\xi, {\bf c})$ have zero
determinant. Although there seem to be many algebraic equations
involved with this statement, as we will see, there are in fact
only {\it four} under appropriate conditions. The main tool used to reduce
the number of equations is the following result from linear algebra.

\begin{lemma}\label{Lemma-LinearDepend}
Let $v_1,\ldots, v_n$ be $n$ vectors in a vector space. Assume
that $v_1,\ldots$, $v_{n-1}$ and $v_1,\ldots, v_{n-2}$,$v_n$ are
each linearly dependent and that $v_1,\ldots, v_{n-2}$ are
linearly independent. Then any subset of $n-1$ vectors from
$\{v_1,\ldots, v_n\}$ is linearly dependent.
\end{lemma}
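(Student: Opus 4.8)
The plan is to exploit the two given linear dependence relations to locate all of $v_n$ inside the span of a fixed subset of the first $n-2$ vectors, and then to argue that this span is too small to contain $n-1$ independent vectors. First I would use the hypothesis that $v_1,\ldots,v_{n-2}$ are linearly independent while $v_1,\ldots,v_{n-1}$ are dependent: this forces $v_{n-1}\in\operatorname{span}\{v_1,\ldots,v_{n-2}\}$. Likewise, from the dependence of $v_1,\ldots,v_{n-2},v_n$ together with the independence of $v_1,\ldots,v_{n-2}$, I get $v_n\in\operatorname{span}\{v_1,\ldots,v_{n-2}\}$. Hence the whole collection $\{v_1,\ldots,v_n\}$ lies in the $(n-2)$-dimensional subspace $W:=\operatorname{span}\{v_1,\ldots,v_{n-2}\}$.

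Next I would take an arbitrary subset $S\subset\{v_1,\ldots,v_n\}$ with $|S|=n-1$. Since every element of $S$ lies in $W$ and $\dim W \le n-2 < n-1 = |S|$, the vectors in $S$ cannot be linearly independent; that is, $S$ is linearly dependent. This is exactly the desired conclusion.

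The only subtlety — and the one step that needs a word of justification rather than a one-line appeal to dimension counting — is the very first implication: that $v_1,\ldots,v_{n-1}$ dependent plus $v_1,\ldots,v_{n-2}$ independent yields $v_{n-1}\in W$. One writes a nontrivial relation $\sum_{i=1}^{n-1}a_i v_i = 0$; if $a_{n-1}=0$ this contradicts independence of $v_1,\ldots,v_{n-2}$, so $a_{n-1}\neq 0$ and $v_{n-1} = -a_{n-1}^{-1}\sum_{i=1}^{n-2} a_i v_i \in W$. The same reasoning gives $v_n\in W$. I do not expect any real obstacle here; the lemma is elementary linear algebra, and the main point of stating it separately is packaging: it is precisely the tool that will later collapse the a priori $\binom{n}{2}^2$-many minor conditions defining $\Sigma_{\text{sing}}({\bf c})$ down to four, by taking the $v_i$ to be the rows (or columns) of the principal symbol $P(\xi,{\bf c})$.
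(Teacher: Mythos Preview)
Your argument is correct and in fact a touch cleaner than the paper's. Both proofs begin at the same point: the nontrivial relation $\sum_{i\le n-1} a_i v_i=0$ must have $a_{n-1}\neq 0$ (else $v_1,\dots,v_{n-2}$ would be dependent), and similarly for the relation involving $v_n$. From here you and the paper diverge. You conclude $v_{n-1},v_n\in W=\operatorname{span}\{v_1,\dots,v_{n-2}\}$ and finish by a dimension count: any $n-1$ vectors in the $(n-2)$-dimensional space $W$ are dependent, handling every subset at once. The paper instead treats a representative subset, say $\{v_2,\dots,v_n\}$, directly: it writes $v_1$ in two ways (once in terms of $v_2,\dots,v_{n-1}$, once in terms of $v_2,\dots,v_{n-2},v_n$) and subtracts to exhibit an explicit nontrivial relation among $v_2,\dots,v_n$. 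Your approach is more uniform; the paper's is more constructive, actually producing the dependence relation --- a feature that is not needed downstream but is in the spirit of the later applications, where one repeatedly checks that specific subcollections of columns or rows of $P(\xi,{\bf c})$ are independent before invoking the lemma.

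One tangential inaccuracy in your closing remark: the number of $(n-1)\times(n-1)$ minors of an $n\times n$ matrix is $n^2$, not $\binom{n}{2}^2$. This does not affect the proof of the lemma.
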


We note that $v_1,\ldots, v_{n-2}$ are common vectors of the two
sets $\{v_1,\ldots, v_{n-1}\}$ and $\{v_1,\ldots, v_{n-2},v_n\}$.
It is crucial to assume that $v_1,\ldots, v_{n-2}$ are linearly
independent. We may consider $e_1, \ldots, e_{n-1}, 0$ in $\mathbb
R^{n-1}$, where $e_1, \ldots, e_{n-1}$ form a basis in $\mathbb
R^{n-1}$. Obviously, $e_1, \ldots, e_{n-1}$ are linearly
independent. However, replacing any $e_i$ by the zero vector
yields a linearly dependent set.

\begin{proof}
The proof is a simple argument from linear algebra. Consider
$v_2,\ldots, v_n$. If either $\{v_2,\ldots,v_{n-1}\}$ or
$\{v_2,\ldots, v_{n-2},v_n\}$ is linearly dependent, so is
$\{v_2,\ldots, v_n\}$. We assume that both
$\{v_2,\ldots,v_{n-1}\}$ and $\{v_2,\ldots, v_{n-2},v_n\}$ are
linearly independent. Then by the linear dependence of
$\{v_1,\ldots, v_{n-1}\}$ and $\{v_1,\ldots, v_{n-2}, v_n\}$,
there exist constants $c_2,\ldots, c_{n-1}$ and $d_2,\ldots,
d_{n-2},d_n$ such that
\begin{align*}
v_1=&c_2v_2+\cdots+c_{n-2}v_{n-2}+c_{n-1}v_{n-1},\\
v_1=&d_2v_2+\cdots+d_{n-2}v_{n-2}+d_nv_n.
\end{align*}
Note that $c_{n-1}\neq 0$ and $d_n\neq 0$, otherwise
$\{v_1,\ldots, v_{n-2}\}$ is linearly dependent, which contradicts
the assumption. By taking a difference, we have
$$(c_2-d_2)v_2+\cdots+(c_{n-2}-d_{n-2})v_{n-2}+c_{n-1}v_{n-1}-d_{n}v_{n}=0.$$
This is a nontrivial combination since $c_{n-1}d_n\neq 0$.
\end{proof}

As an application, we discuss conditions under which all minors of
a matrix have zero determinant. We will present only a
simple case. For a matrix $P$, let $P^i_j$ be the minor obtained
by deleting the $i$-th row and $j$-th column from $P$.

\begin{lemma}\label{Lemma-ZeroMinors}
Let $P$ be an $n\times n$ matrix with $n\ge 2$, and suppose that the
four minors $P^1_1$, $P^2_2$, $P^1_2$, and $P^2_1$ have zero determinant.
If all $(n-1)\times (n-2)$ and $(n-2)\times (n-1)$ submatrices are of full rank,
then all minors of $P$ have zero determinant.
\end{lemma}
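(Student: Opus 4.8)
The plan is to view the columns of $P$ as vectors in $\mathbb{R}^n$ and apply Lemma \ref{Lemma-LinearDepend} twice — once to the columns, and once to the rows — using the full-rank hypotheses on submatrices to supply the ``common linearly independent set'' that the lemma requires. Write $P$ in terms of its columns $C_1,\ldots,C_n$. The vanishing of $\det P^1_1$ means the $(n-1)\times(n-1)$ matrix obtained by deleting row $1$ and column $1$ is singular, i.e.\ the truncated columns $\widehat{C_2},\ldots,\widehat{C_n}$ (with first coordinate removed) are linearly dependent; similarly $\det P^1_2=0$ says $\widehat{C_1},\widehat{C_3},\ldots,\widehat{C_n}$ are linearly dependent. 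The two sets share the $n-2$ truncated columns $\widehat{C_3},\ldots,\widehat{C_n}$, and these are linearly independent precisely because the $(n-1)\times(n-2)$ submatrix of $P$ formed by columns $3,\ldots,n$ and rows $2,\ldots,n$ has full rank $n-2$. So Lemma \ref{Lemma-LinearDepend} applies to $\widehat{C_1},\ldots,\widehat{C_n}$ in $\mathbb{R}^{n-1}$ and yields: every $n-1$ of the truncated-(by row $1$) columns are linearly dependent. Equivalently, every $(n-1)\times(n-1)$ minor of $P$ obtained by deleting row $1$ has zero determinant.

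Next I would run the symmetric argument on the rows. The hypothesis that $\det P^1_1=0$ and $\det P^2_1=0$ says that, after deleting column $1$, the rows $R_2,\ldots,R_n$ are linearly dependent and the rows $R_1,R_3,\ldots,R_n$ are linearly dependent; these share the $n-2$ truncated rows $\widehat{R_3},\ldots,\widehat{R_n}$, which are independent because the $(n-2)\times(n-1)$ submatrix of $P$ on rows $3,\ldots,n$ and columns $2,\ldots,n$ has full rank. Lemma \ref{Lemma-LinearDepend} then gives that every $(n-1)\times(n-1)$ minor of $P$ obtained by deleting column $1$ vanishes. Combining the two conclusions: for any $i$, $P^i_1=0$ (from the row argument: delete column $1$, any choice of deleted row), and for any $j$, $P^1_j=0$ (from the column argument). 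This already handles every minor missing row $1$ or column $1$.

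It remains to treat a minor $P^i_j$ with $i,j\ge 2$. Here I would either iterate the construction — knowing $P^1_1=P^1_2=\cdots=0$ and $P^1_1=P^2_1=\cdots=0$ gives enough vanishing $2\times 2$-type collinearity data to re-seed Lemma \ref{Lemma-LinearDepend} with a different pair of deleted indices — or, more cleanly, observe directly that once an entire ``border'' row of minors vanishes we can bootstrap. Concretely: fix $i\ge 2$; we know $\det P^1_1=\det P^i_1=0$ and, from the first paragraph's conclusion, $\det P^1_1=\det P^1_i=0$. Applying Lemma \ref{Lemma-LinearDepend} to the columns truncated by row $i$ (whose shared $n-2$ columns are independent by the submatrix hypothesis, now with row $i$ removed instead of row $1$) shows every minor deleting row $i$ vanishes; in particular $\det P^i_j=0$. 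Since $i\ge 2$ was arbitrary, all minors vanish.

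The main obstacle is bookkeeping: one must check that each time Lemma \ref{Lemma-LinearDepend} is invoked, the relevant $(n-1)\times(n-2)$ or $(n-2)\times(n-1)$ submatrix supplying the independent ``core'' is among those assumed to have full rank — i.e.\ that the full-rank hypothesis, as stated, really covers all the index choices the argument needs, not just the one seeded by deleting row/column $1$. I expect this is exactly why the hypothesis is phrased as ``\emph{all} $(n-1)\times(n-2)$ and $(n-2)\times(n-1)$ submatrices are of full rank'' rather than a single one; verifying that this blanket assumption suffices for the bootstrap is the only real content, the rest being the two clean applications of Lemma \ref{Lemma-LinearDepend} above.
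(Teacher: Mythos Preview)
Your first two steps are correct and parallel the paper's opening move, though you pivot to column~$1$ where the paper pivots to row~$2$. The gap is in your bootstrap. To apply Lemma~\ref{Lemma-LinearDepend} to the columns truncated by row~$i$ (for $i\ge 3$), you need \emph{two} minors of the form $P^i_{j_1}$ and $P^i_{j_2}$ already known to vanish, sharing $n-2$ independent truncated columns. From your steps 1 and 2 you have only $\det P^i_1=0$ (via the column-$1$ argument); the minors $P^1_1$ and $P^1_i$ that you cite delete row~$1$, not row~$i$, and hence say nothing about the columns truncated by row~$i$. A single seed is not enough to invoke the lemma, so the claimed conclusion ``every minor deleting row~$i$ vanishes'' does not follow.

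The repair is to use the one hypothesis you have not yet touched, namely $\det P^2_2=0$. Combined with $\det P^2_1=0$ (which you already have, either directly or from your column-$1$ step), this gives two vanishing minors in row~$2$, and the lemma then forces $\det P^2_j=0$ for every $j$. Now each column~$j$ carries two seeds, $P^1_j$ and $P^2_j$, and one further application of the lemma (to rows truncated by column~$j$, using the $(n-2)\times(n-1)$ full-rank hypothesis) gives $\det P^i_j=0$ for all $i$. This is precisely the paper's route: first rows~$1$ and~$2$, then every column.
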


\begin{proof}
First consider minors without the first row of $P$. We already
have $\det P^1_1=\det P^1_2=0$. The common part of
$P^1_1$ and $P^1_2$ is an $(n-1)\times (n-2)$ matrix obtained by
deleting the first row and the first and second column from $P$,
and hence is of full rank.  By Lemma \ref{Lemma-LinearDepend}, all
minors without the first row in $P$ have zero determinant. In
applying Lemma \ref{Lemma-LinearDepend}, we treat the $(n-1)\times n$
submatrix obtained by deleting the first row from $P$ as a
collection of $n$ column vectors. Similarly, all minors without the
second row in $P$ have zero determinant.

Now we consider other minors, say without the third column. From
the discussion above, $P^1_3$ and $P^2_3$ have zero
determinant. The common part of these two minors is
an $(n-2)\times (n-1)$ matrix obtained by deleting the third column
and the first and second rows from $P$, and is of full
rank. Hence, all minors without the third column in $P$ have zero
determinant. Similarly all minors without the first, second, $\cdots$, or
the $n$-th column in $P$ have zero determinant. In conclusion, all
minors have zero determinant.
\end{proof}

It is clear from the proof that the assumption that all
$(n-1)\times (n-2)$ and $(n-2)\times (n-1)$ submatrices are of full
rank can be relaxed. We only need certain submatrices to be of full
rank. However, we point out that certain conditions are indeed
necessary. For example, the $4\times 4$ diagonal matrix
diag$(1,1,1,0)$ has all but one minor with zero determinant. In
our study of the characteristic variety later on, we will not use
Lemma \ref{Lemma-ZeroMinors} directly. Instead, we will examine
whether certain submatrices are of full rank so that we can apply
Lemma \ref{Lemma-LinearDepend}.

Lemma \ref{Lemma-ZeroMinors} asserts that there are only four
algebraic equations to satisfy, under appropriate conditions, in
order that all minors of the principal symbol have zero
determinant.

We will now study the characteristic variety in $\mathbb R^n$. In
order to introduce a smallness assumption on the parameters of
the system (\ref{1.12z}), we again let
$$t{\bf
c}=\{tc_{i}^{kj}\}_{k\neq j},$$ for some small $t$. According to
(\ref{eq-CharVar}), the principal symbol $P(\x, t{\bf c})=(p_{ij})$ is then given by
\begin{align*}
\begin{split}
p_{ii}=&\xi_i+tb_{ii},\\
p_{ij}=&tb_{ij},\text{ }\text{ for any }i\neq j,
\end{split}
\end{align*}
where
\begin{equation*}
b_{ij}=\sum_{k\neq j}c_{i}^{kj}\xi_{k}.
\end{equation*}

We begin with the case $n=5$, where certain calculations are less
formidable. Pick $i,j\in\{1,\ldots,5\}$ with $i\neq j$, and consider
the following generic conditions on the parameters:
\begin{equation}\label{cond1}
c_{i}^{kj}c_{j}^{li}\neq c_{j}^{ki}c_{i}^{lj} \text{ }\text{ for all }\text{
}\text{ }k\neq l, \text{ }\text{ with }\text{ }k\neq i,j \text{
}\text{ and }\text{ }l\neq i,j,
\end{equation}
and
\begin{align}\label{cond2}\begin{split}
c_{p}^{kI}(c_{i}^{lj}c_{j}^{mi}-c_{j}^{li}c_{i}^{mj})
+&c_{p}^{lI}(c_{j}^{ki}c_{i}^{mj}-c_{i}^{kj}c_{j}^{mi})\\
+&c_{p}^{mI}(c_{i}^{kj}c_{j}^{li}-c_{j}^{ki}c_{i}^{lj})\neq 0,
\end{split}\end{align}
for all $p\neq i,j$ where $k,l,m$ are chosen so that
$\{i,j,k,l,m\}=\{1,\ldots,5\}$ and where $I=i$ or $I=j$. In general,
we will say that the parameters satisfy a \textit{generic condition}
if they satisfy a finite number of (homogeneous) polynomial
inequalities.



\begin{theorem}\label{Thm-5dim}
Let $n=5$. If all elements $c_{i}^{kj}$ of ${\bf c}$ satisfy the conditions
(\ref{cond1}) and (\ref{cond2}), then for any sufficiently small
$t>0$, $\Sigma_{\text{sing}}(t{\bf c})\cap \mathbb{P}^4$ contains
ten points. Moreover, if the parameters satisfy a further generic
condition, then for any sufficiently small $t>0$,
$\Sigma_{\text{sing}}(t{\bf c})\cap \mathbb{P}^4$ consists of
$10+\alpha+2\beta+3\gamma$ points where $\alpha$, $\beta$, $\gamma$ are nonnegative
integers with $\alpha+\gamma=10$ and $\beta\leq 5$.
\end{theorem}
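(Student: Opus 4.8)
The plan is to locate $\Sigma_{\text{sing}}(t{\bf c})$ by a perturbation analysis in $t$, using Lemma~\ref{MainLemma} to reduce to the vanishing of all $(n-1)\times(n-1)$ minors of $P(\x,t{\bf c})$ and Lemma~\ref{Lemma-LinearDepend} to cut the number of genuine equations down to four (say $\det P^1_1=\det P^2_2=\det P^1_2=\det P^2_1=0$, after checking that the relevant $(n-1)\times(n-2)$ and $(n-2)\times(n-1)$ submatrices have full rank for small $t$). Since $P(\x,t{\bf c})=\operatorname{diag}(\xi_i)+t(b_{ij})$, the leading term of $\det P^i_i$ as $t\to 0$ is $\prod_{k\neq i}\xi_k$, so any limit point $a=\lim_{t\to 0}\xi(t)$ of singular points must have at least two vanishing coordinates; by symmetry of the enumeration we organize the argument around which pair vanishes. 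For $n=5$ this forces $a$ to lie near one of the coordinate ``axis'' directions (two or three coordinates zero), and the core of the proof is to show that near each admissible $a$ there is, generically, exactly one branch of $\Sigma_{\text{sing}}$, via the implicit function theorem applied to the four reduced equations.

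\medskip
First I would set up the enumeration: the points of $\mathbb{P}^4$ with exactly two homogeneous coordinates equal to zero fall into $\binom{5}{3}=10$ families (choice of the three nonzero slots), each family being a $\mathbb{P}^2$; I claim the generic singular points bifurcating from $t=0$ sit near finitely many special directions inside these $\mathbb{P}^2$'s, plus possibly extra points near directions with three zero coordinates (there are $\binom{5}{2}=10$ such coordinate lines, explaining the count $\alpha+\gamma=10$, and the remaining $\beta\le 5$ coming from the five ``fully degenerate'' axis directions $e_i$). For a limit point with, say, $a_1=a_2=0$ and $a_3,a_4,a_5\neq 0$, I would expand $\det P^1_2$ and $\det P^2_1$ to first order in $t$: the $t^1$ coefficients are (up to the nonzero factor $\xi_3\xi_4\xi_5$) equal to $b_{21}$ and $b_{12}$ respectively, so $b_{12},b_{21}\to 0$ forces two linear relations $\sum_{k\neq 1}c_2^{k1}a_k=\sum_{k\neq 2}c_1^{k2}a_k=0$; using $c_i^{ij}=0$ for $i\ne j$ and symmetry, these become $c_2^{31}a_3+c_2^{41}a_4+c_2^{51}a_5=0$ and $c_1^{32}a_3+c_1^{42}a_4+c_1^{52}a_5=0$, a rank-two linear system (generically, by condition~(\ref{cond1})-type inequalities) determining $[a_3:a_4:a_5]$ uniquely. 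This pins down one candidate direction per family, giving the first ten points; the ``moreover'' count comes from the more delicate axis and coordinate-line directions, where the $t$-expansion degenerates and one must go to second order, producing the integers $\alpha,\beta,\gamma$ according to how many of the higher-order obstructions (of the shape~(\ref{cond2})) are satisfied with a definite sign.

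\medskip
The key steps, in order: (1) verify the full-rank hypotheses of Lemma~\ref{Lemma-LinearDepend}/\ref{Lemma-ZeroMinors} hold along the relevant branches for small $t$, reducing $\Sigma_{\text{sing}}$ to four scalar equations $G_1=G_2=G_3=G_4=0$ with $G_\alpha=G_\alpha(\x,t)$ real-analytic and $G_\alpha(\x,0)$ a monomial in the $\xi_k$; (2) stratify the zero locus of the leading monomials in $\mathbb{P}^4$ into the ten ``two-zero'' $\mathbb{P}^2$'s, the ten coordinate lines, and the five axis points; (3) on each two-zero stratum, show the first-order ($t^1$) part of the reduced equations is a generically-nondegenerate linear system in the surviving coordinates, yielding one simple root, and apply the implicit function theorem in $t$ to continue it to a unique nearby singular point for $t>0$ small — this gives the ten guaranteed points under (\ref{cond1}),(\ref{cond2}); (4) on the ten coordinate lines and five axis points, carry out the second-order analysis: here the $t^1$ part vanishes identically, the $t^2$ part becomes the determining system, and condition~(\ref{cond2}) (together with a further generic inequality, which I would state explicitly as the sign/nonvanishing of certain discriminants) controls how many roots survive, contributing the multiplicities bundled into $\alpha,\beta,\gamma$ with $\alpha+\gamma=10$, $\beta\le 5$; (5) finally, a Bézout-type or degree count on $\mathbb{P}^4$ confirms that no singular points escape to infinity or coalesce, so the enumeration is exhaustive.

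\medskip
The main obstacle I expect is step~(4): along the coordinate lines and axis directions the naive first-order perturbation is degenerate, so one must correctly identify the \emph{second} non-trivial term in the $t$-expansion of the four reduced minors, show that the relevant $2\times 2$ or $3\times 3$ Jacobian with respect to the blown-up coordinates is generically invertible, and count real solutions of the resulting quadratic/cubic system — it is precisely the sign conditions in this count that force the case split into $\alpha$, $\beta$, $\gamma$ with the stated constraints. A secondary technical nuisance is keeping the full-rank hypotheses of Lemma~\ref{Lemma-LinearDepend} honest uniformly as $t\to 0$ near the degenerate strata, since some of the relevant submatrices also degenerate there; this will require its own small perturbation lemma rather than a direct appeal to Lemma~\ref{Lemma-ZeroMinors}.
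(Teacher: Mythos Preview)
Your plan is essentially the paper's own argument: reduce to four minor equations via Lemma~\ref{Lemma-LinearDepend}, stratify limit points by the number of vanishing coordinates, use first-order expansion plus the implicit function theorem for the ten ``two-zero'' points, and a higher-order expansion on the ``three-zero'' and ``four-zero'' strata to produce the $\alpha+3\gamma$ (from a cubic) and $2\beta$ (from a quadratic) extra points. Two executional points are worth sharpening against the paper. First, the set of four minors is \emph{not} always $\{P^1_1,P^2_2,P^1_2,P^2_1\}$: in the three-zero case the paper works with $P^1_2,P^2_1,P^1_3,P^2_3$, and in the four-zero case with $P^4_2,P^3_2,P^2_1,P^4_3$, because only these choices yield a lower-triangular Jacobian $DG(\overline z,0)$ at the right order in $t$; sticking with $P^1_1,P^2_2,P^1_2,P^2_1$ there makes the IFT step fail. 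Second, condition~(\ref{cond2}) is used in Case~1, precisely to guarantee $b_{k1}(0)\neq 0$ or $b_{k2}(0)\neq 0$ so that Lemma~\ref{Lemma-LinearDepend} applies to the remaining minors --- it is not the source of the $\alpha,\beta,\gamma$ counts, which come from the ``further generic conditions'' in the theorem statement. Finally, no B\'ezout argument is needed for exhaustiveness: compactness of $\mathbb S^4$ forces every singular branch to limit to some $a$ with at least two zero coordinates, the three cases cover all such $a$, and local uniqueness from the implicit function theorem finishes the count.
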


\begin{proof}
In light of the discussion at the beginning of this section, our
goal will be to construct a curve
$\xi(t)\in\mathbb{S}^{4}\subset\mathbb{R}^{5}$ such that all
$4\times 4$ minor determinants of $P(\xi(t),t{\bf c})$ vanish, for
all $t$ sufficiently small. If this is to occur, then as in the
first paragraph of the proof of Theorem \ref{theoremdim4}, we must
have (after possibly passing to a subsequence)
$$\xi(t)\rightarrow a=(a_{1},\ldots,a_{5}),$$
where two elements of $a$ vanish, say $a_{1}=a_{2}=0$. There are then three
cases to consider, namely: $a_{3}a_{4}a_{5}\neq 0$, $a_{3}=0$ and
$a_{4}a_{5}\neq 0$, $a_{3}=a_{4}=0$ and $a_{5}\neq 0$.\medskip

\textit{Case 1. $a_{1}=a_{2}=0$ and $a_{3}a_{4}a_{5}\neq 0$.}\medskip

Since $a_{1}=a_{2}=0$, we will write
$$\xi_{1}(t)=ty_{1}(t), \quad
\xi_{2}(t)=ty_{2}(t).$$ It follows that
\begin{equation*}
\xi_{1}+tb_{11}=tx_{1}+t^{2}c_{1}^{21}y_{2},\text{ }\text{ }\text{
}\text{ } \xi_{2}+tb_{22}=tx_{2}+t^{2}c_{2}^{12}y_{1},
\end{equation*}
where
\begin{equation*}
x_{i}=y_{i}+\sum_{k>2}c_{i}^{ki}\xi_{k}.
\end{equation*}
Denote by $P_{j}^{i}$  the minor of the principal symbol obtained by
deleting the $i$th row and $j$th column. We will analyze
$$\det P_1^1=\det P_1^2=\det P_2^1=\det P_2^2=0.$$
First,
\begin{equation*}
\det P_{1}^{1}=t\xi_{3}\xi_{4}\xi_{5}x_{2}+O(t^{2}),\text{ }\text{
}\text{ }\text{ }\det
P_{2}^{2}=t\xi_{3}\xi_{4}\xi_{5}x_{1}+O(t^{2}).
\end{equation*}
This implies $x_1\to0$ and $x_2\to0$ as $t\to0$. Hence, we write $$x_{i}(t)=tz_{i}(t)\quad\text{for }i=1,2,$$  for some
$z_{i}$. Moreover we have
\begin{equation*}
\det P_{2}^{1}=t\xi_{3}\xi_{4}\xi_{5}b_{21}+O(t^{2}),\text{ }\text{
}\text{ }\text{ }\det
P_{1}^{2}=t\xi_{3}\xi_{4}\xi_{5}b_{12}+O(t^{2}).
\end{equation*}
Then we have  $b_{12}\to 0$ and $b_{21}\to 0$ as $t\to 0$. This suggests that we write
\begin{equation}\label{algeqn1}
\sum_{k>2}c_{1}^{k2}\xi_{k}=tz_{3}(t),\text{ }\text{ }\text{ }\text{ }
\sum_{k>2}c_{2}^{k1}\xi_{k}=tz_{4}(t),
\end{equation}
for some $z_{3}$ and $z_{4}$, so that
\begin{equation*}
b_{12}=t(c_{1}^{12}y_{1}+z_{3}),\text{ }\text{ }\text{ }\text{ }
b_{21}=t(c_{2}^{21}y_{2}+z_{4}).
\end{equation*}
Upon calculating the four determinants above in terms of the $z_{i}$
we obtain,
\begin{align}\label{dett1}\begin{split}
\det
P_{1}^{1}=t^{2}&
\bigg[\big(z_{2}-c_{2}^{12}\sum_{k>2}c_{1}^{k1}\xi_{k}\big)
\xi_{3}\xi_{4}\xi_{5}
-b_{23}b_{32}\xi_{4}\xi_{5}\\
&\quad\quad-b_{24}b_{42}\xi_{3}\xi_{5}
-b_{25}b_{52}\xi_{3}\xi_{4}\bigg]+O(t^{3}),
\end{split}\end{align}
\begin{align}\label{dett2}\begin{split}
\det
P_{2}^{2}=t^{2}&\bigg[\big(z_{1}-c_{1}^{21}\sum_{k>2}c_{2}^{k2}\xi_{k}\big)
\xi_{3}\xi_{4}\xi_{5}
-b_{13}b_{31}\xi_{4}\xi_{5}\\
&\quad\quad-b_{14}b_{41}\xi_{3}\xi_{5}
-b_{15}b_{51}\xi_{3}\xi_{4}\bigg]+O(t^{3}),
\end{split}\end{align}
\begin{align}\label{dett3}\begin{split}
\det
P_{2}^{1}=t^{2}&\bigg[\big(z_{4}-c_{2}^{12}\sum_{k>2}c_{2}^{k2}\xi_{k}\big)
\xi_{3}\xi_{4}\xi_{5}
-b_{23}b_{31}\xi_{4}\xi_{5}\\
&\quad\quad-b_{24}b_{41}\xi_{3}\xi_{5}
-b_{25}b_{51}\xi_{3}\xi_{4}\bigg]+O(t^{3}),
\end{split}\end{align}
\begin{align}\label{dett4}\begin{split}
\det
P_{1}^{2}=t^{2}&\bigg[\big(z_{3}-c_{1}^{21}\sum_{k>2}c_{1}^{k1}\xi_{k}\big)
\xi_{3}\xi_{4}\xi_{5}
-b_{13}b_{32}\xi_{4}\xi_{5}\\
&\quad\quad-b_{14}b_{42}\xi_{3}\xi_{5}
-b_{15}b_{52}\xi_{3}\xi_{4}\bigg]+O(t^{3}).
\end{split}\end{align}

Define functions $G_{i}$ by
\begin{equation*}
\det P_{2}^{2}=t^{2}G_{1},\text{ }\text{ }\text{ }\det
P_{1}^{1}=t^{2}G_{2},\text{ }\text{ }\text{ }\det P_{1}^{2}
=t^{2}G_{3},\text{ }\text{ }\text{ }\det P_{2}^{1}=t^{2}G_{4}.
\end{equation*}
We would like each $G_{i}$ to be a function of $z_{i}$ and $t$. To
see that this is the case, we recall (\ref{cond1}) with $i=1$,
$j=2$. Since (\ref{cond1}) holds with $k=3$ and $l=4$, we may solve
equations (\ref{algeqn1}) for $\xi_{3}$ and $\xi_{4}$ in terms of
$\xi_{5}$, $z_{3}$, and $z_{4}$. More precisely, for these values of
$k$ and $l$, fix $\xi_{5}(t)=a_{5}$, a nonzero constant. Then solve
to obtain
\begin{align*}
\xi_{3}(t)=a_{3}+&t(c_{1}^{32}c_{2}^{41}-c_{2}^{31}c_{1}^{42})^{-1}
[(c_{2}^{41}z_{3}(t)-c_{1}^{42}z_{4}(t))\\
&+(c_{1}^{42}c_{2}^{51}-c_{2}^{41}c_{1}^{52})a_{5}],
\end{align*}
\begin{align*}
\xi_{4}(t)=a_{4}+&t(c_{1}^{32}c_{2}^{41}-c_{2}^{31}c_{1}^{42})^{-1}
[(c_{1}^{32}z_{4}(t)-c_{2}^{31}z_{3}(t))\\
&+(c_{2}^{31}c_{1}^{52}-c_{1}^{32}c_{2}^{51})a_{5}],
\end{align*}
where $a_{3}$, $a_{4}$, $a_{5}$ satisfy
\begin{equation}\label{algeqna1}
\sum_{k>2}c_{1}^{k2}a_{k}=0,\text{ }\text{ }\text{ }\text{ }\sum_{k>2}
c_{2}^{k1}a_{k}=0.
\end{equation}
Note that if $a_{5}\neq 0$, then $a_{3}a_{4}\neq 0$, in light of
(\ref{cond1}). We now have a map
\begin{equation*}
G=(G_{1},G_{2},G_{3},G_{4}):\mathbb{R}^{4}\times
\mathbb{R}\rightarrow\mathbb{R}^{4},
\end{equation*}
with $G(\overline{z},0)=0$, where $\overline{z}$ may be determined
from (\ref{dett1}), (\ref{dett2}), (\ref{dett3}), and (\ref{dett4}).
Moreover a simple calculation shows that
$DG(\overline{z},0)=a_{3}a_{4}a_{5}I_{4}$, where $I_{4}$ denotes the
$4\times 4$ identity matrix. By the implicit function theorem
there exists $z(t)=(z_{1}(t),z_{2}(t),z_{3}(t),z_{4}(t))$, such that
$G(z(t),t)=0$ for all sufficiently small $t$. Thus we have found a
curve $\xi(t)$ such that
$$\det P_{1}^{1}(t)=\det P_{2}^{2}(t)=\det
P_{2}^{1}(t)=\det P_{1}^{2}(t)=0.$$

We now claim that all remaining minor determinants vanish as well,
on the curve $\xi(t)$. Consider the last three column vectors of the
principal symbol $P(\xi(t),{\bf c}(t))$ minus the first or second
row. These are linearly independent for small $t$ since
$a_{3}a_{4}a_{5}\neq 0$. If we append the first or second column
(again minus the first or second row) then these four vectors are
linearly dependent since $\det P_{1}^{1}(t)=\det P_{2}^{2}(t)=\det
P_{2}^{1}(t)=\det P_{1}^{2}(t)=0$. So by Lemma
\ref{Lemma-LinearDepend}, all minors $P_{j}^{i}$ with $i=1,2$ have zero
determinant.

Now we move on to columns. Consider the first or second column, that
is consider the last three row vectors minus the first or second
column. These are linearly independent, since $a_{3}a_{4}a_{5}\neq
0$. If we append the first or second row (again minus the first or
second column), then these four vectors are linearly dependent since
$\det P_{1}^{1}=\det P_{1}^{2}=0$. Hence by Lemma
\ref{Lemma-LinearDepend}, all minors $P_{j}^{i}$ with $j=1,2$ have zero
determinant.

Now consider columns 3, 4, and 5. Say column 3. The last three rows
(minus the third column) are linearly independent if $b_{31}\neq 0$
or $b_{32}\neq 0$. Moreover, by appending either the first or second
row (minus the third column), we obtain four linearly dependent
vectors by what has been shown above. Thus Lemma
\ref{Lemma-LinearDepend} shows that all minors $P_{j}^{i}$ with $j=3$
have zero determinant. The same conclusion follows for $j=4$ if
$b_{41}\neq 0$ or $b_{42}\neq 0$, and for $j=5$ if $b_{51}\neq 0$ or
$b_{52}\neq 0$. Therefore all minor determinants vanish, if certain
$b_{kl}$ do not vanish at $t=0$. This fact follows directly from
(\ref{cond2}) with $i=1$ and $j=2$. To see this, solve the linear
algebraic equations (\ref{algeqna1}) for $a_{3}$, $a_{4}$, and
$a_{5}$, and then insert the result into $b_{kl}$ to obtain the
desired conclusion.

In summary, we have constructed a curve $\xi(t)\rightarrow
a=(0,0,a_{3},a_{4},a_{5})$, where the (nonzero) components $a_{3}$, $a_{4}$,
$a_{5}$ satisfy the linear algebraic equations (\ref{algeqna1})
(note that there is only one such point $a\in\mathbb{P}^{4}$), such
that all minor determinants of the principal symbol vanish. It
follows that for each sufficiently small $t$, $\xi(t)$ lies in the
singular part of the characteristic variety. By considering all
possible cases of two zero components, that is $a_{i}=a_{j}=0$, we
obtain ten distinct points in $\Sigma_{\text{sing}}(t{\bf c})\cap
\mathbb{P}^4$. This proves the first statement of the theorem.\medskip

\textit{Case 2. $a_{1}=a_{2}=a_{3}=0$ and $a_{4}a_{5}\neq 0$.}\medskip

Following the strategy of Case 1, we will write
$\xi_{i}(t)=ty_{i}(t)$ for $i=1,2,3$. It follows that
\begin{equation*}
\xi_{1}+tb_{11}=tx_{1}+t^{2}(c_{1}^{21}y_{2}+c_{1}^{31}y_{3}),
\end{equation*}
\begin{equation*}
\xi_{2}+tb_{22}=tx_{2}+t^{2}(c_{2}^{12}y_{1}+c_{2}^{32}y_{3}),
\end{equation*}
\begin{equation*}
\xi_{3}+tb_{33}=tx_{3}+t^{2}(c_{3}^{13}y_{1}+c_{3}^{23}y_{2}),
\end{equation*}
where
\begin{equation*}
x_{i}=y_{i}+c_{i}^{4i}\xi_{4}+c_{i}^{5i}\xi_{5}.
\end{equation*}
We will analyze
$$\det P_2^1=\det P_1^2=\det P_3^1=\det P_3^2=0.$$
First,
\begin{align*}
\det
P_{2}^{1}&=t^{2}(b_{21}x_{3}-b_{23}b_{31})\xi_{4}\xi_{5}+O(t^{3}),\\
\det
P_{1}^{2}&=t^{2}(b_{12}x_{3}-b_{13}b_{32})\xi_{4}\xi_{5}+O(t^{3}),
\end{align*}
which motivates the following. For functions $z_{i}(t)$, $i=1,2,3,4$, to be determined,
we will write
\begin{equation}\label{eqnforb}
x_{1}=z_{1},\text{ }\text{ }\text{ }x_{2}=z_{2},\text{ }\text{ }\text{ }
x_{3}=\frac{b_{23}(0)b_{31}(0)}{b_{21}(0)}+tz_{3},\text{ }\text{ }\text{ }
b_{12}b_{31}b_{23}-b_{21}b_{13}b_{32}=tz_{4},
\end{equation}
where $b_{ij}(0)$ signifies $b_{ij}$ evaluated at $t=0$. Note that under generic conditions
on the parameters $c_{kij}$ we have that $b_{21}(0)\neq 0$
(that this is possible is a consequence of the way in which $a_4$ and $a_5$ are chosen below).
We claim that knowledge of
$z(t)=(z_{1},\ldots,z_{4})$ is equivalent to knowledge of $\xi(t)$. To see this, fix $a_{5}\neq 0$
and let $a_{4}$ solve the cubic polynomial
\begin{align}\label{eqnfora}
\begin{split}
&(c_{1}^{42}a_{4}+c_{1}^{52}a_{5})(c_{3}^{41}a_{4}+c_{3}^{51}a_{5})(c_{2}^{43}a_{4}+c_{2}^{53}a_{5})\\
=&(c_{2}^{41}a_{4}+c_{2}^{51}a_{5})(c_{1}^{43}a_{4}+c_{1}^{53}a_{5})(c_{3}^{42}a_{4}+c_{3}^{52}a_{5}),
\end{split}
\end{align}
which is the last equation of (\ref{eqnforb}) evaluated at $t=0$. Note that this polynomial
has either one or three nonzero solutions for $a_{4}$ again under generic conditions on the parameters.
We then set $\xi_{5}(t)=a_{5}$ and $\xi_{4}=a_{4}+ty_{4}(t)$. Generic conditions
also imply that $y_{4}$ may be determined in terms of $z_{i}$ and $t$ from (\ref{eqnforb}). Thus our task of
constructing $\xi(t)$ is reduced to finding $z(t)$.

We now calculate four determinants in terms of the $z_{i}$:
\begin{align*}
\det P_{2}^{1}&=t^{3}z_{3}b_{21}\xi_{4}\xi_{5}
+O(t^{3}|z_{1}|,t^{3}|z_{2}|,t^{4}),\\
\det P_{1}^{2}&=t^{3}(b_{21}^{-1}z_{4}+b_{12}z_{3})\xi_{4}\xi_{5}
+O(t^{3}|z_{1}|,t^{3}|z_{2}|,t^{4}),
\end{align*}
and
\begin{align*}
\det P_{3}^{2}&=t^{2}(b_{32}z_{1}-b_{12}b_{31})\xi_{4}\xi_{5}+O(t^{3}),\\
\det P_{3}^{1}&=t^{2}(b_{21}b_{32}-b_{31}z_{2})\xi_{4}\xi_{5}+O(t^{3}).
\end{align*}
We may then define a map
\begin{equation*}
G=(G_{1},G_{2},G_{3},G_{4}):\mathbb{R}^{4}\times\mathbb{R}\rightarrow\mathbb{R}^4,
\end{equation*}
where the components of $G$ are functions of $z$ and $t$, and are given by
\begin{equation*}
\det P_{3}^{2}=t^{2}G_{1},\text{ }\text{ }\text{ }\det P_{3}^{1}=t^{2}G_{2},\text{ }
\text{ }\text{ }\det P_{2}^{1}=t^{3}G_{3},\text{ }\text{ }\text{ }\det P_{1}^{2}=t^{3}G_{4}.
\end{equation*}
Let $\overline{z}$ be such that $G(\overline{z},0)=0$, it then follows that
\begin{equation*}
DG(\overline{z},0)=
a_{4}a_{5}\left(\begin{matrix}
b_{32}(0)&0&0&0\\
0&-b_{31}(0)&0&0\\
\ast&\ast&b_{21}(0)&0\\
\ast&\ast&\ast&b_{21}(0)^{-1}
\end{matrix}\right),
\end{equation*}
where $\ast$ represents an expression which is unimportant. Generic conditions on
the parameters then guarantee that this matrix is invertible. The implicit function
theorem then yields $z(t)$ with $G(z(t),t)=0$ for sufficiently small $t$. Thus we have
constructed a curve $\xi(t)$ on which the four determinants calculated above vanish.

We now show that all other minor determinants of the principal symbol vanish on $\xi(t)$,
under generic conditions on the parameters. Consider columns 1, 4, and 5, minus the first
or second row. These vectors are linearly independent for small $t$, if $b_{21}(0)\neq 0$
and $b_{31}(0)\neq 0$. Since $\det P_{2}^{1}=\det P_{3}^{1}=0$ we may apply Lemma \ref{Lemma-LinearDepend}
to show that all minors constructed from an element in the first two rows have zero determinant.
Now consider the last three rows. If $b_{32}(0)\neq 0$ then these vectors are linearly
independent for small $t$. Thus by using what has been shown above, and applying
Lemma \ref{Lemma-LinearDepend}, we find that all minors constructed from an element in the
first column have zero determinant. The same result holds similarly for the second
and third columns. For columns 4 and 5, we obtain this result if (respectively)
\begin{equation*}
b_{31}(0)b_{42}(0)\neq b_{41}(0)b_{32}(0),\text{ }\text{ }\text{ }
b_{31}(0)b_{52}(0)\neq b_{51}(0)b_{32}(0).
\end{equation*}

In conclusion, we have constructed a curve $\xi(t)\rightarrow
a=(0,0,0,a_{4},a_{5})$, where the (nonzero) components $a_{4}$,
$a_{5}$ satisfy the equation (\ref{eqnfora})
(note that under generic conditions on the parameters there is
either one or three such points $a\in\mathbb{P}^{4}$), such
that all determinant minors of the principal symbol vanish. It
follows that for each sufficiently small $t$, $\xi(t)$ lies in the
singular part of the characteristic variety. By considering all
possible cases of three zero components, that is $a_{i}=a_{j}=a_{k}=0$, we
obtain $\alpha+3\gamma$ distinct points in $\Sigma_{\text{sing}}(t{\bf c})\cap
\mathbb{P}^4$, where $\alpha$ and $\gamma$ are nonnegative integers summing
to ten.\medskip

\textit{Case 3. $a_{1}=a_{2}=a_{3}=a_{4}=0$ and $a_{5}\neq 0$.}\medskip

We proceed in the same way as in the previous two cases. Set $\xi_{i}(t)=ty_{i}(t)$,
$i=1,2,3,4$ and calculate
\begin{align*}
\xi_{1}+tb_{11}&=tz_{1}+t^{2}(c_{1}^{21}y_{2}+c_{1}^{31}y_{3}+c_{1}^{41}y_{4}),\\
\xi_{2}+tb_{22}&=tz_{2}+t^{2}(c_{2}^{12}y_{1}+c_{2}^{32}y_{3}+c_{2}^{42}y_{4}),\\
\xi_{3}+tb_{33}&=tz_{3}+t^{2}(c_{3}^{13}y_{1}+c_{3}^{23}y_{2}+c_{3}^{43}y_{4}),\\
\xi_{4}+tb_{44}&=tz_{4}+t^{2}(c_{4}^{14}y_{1}+c_{4}^{24}y_{2}+c_{4}^{34}y_{4}),
\end{align*}
where
\begin{equation*}
z_{i}=y_{i}+c_{i}^{5i}\xi_{5}.
\end{equation*}
Now calculate four determinants in terms of the $z_{i}$:
\begin{align*}
\det P_{2}^{4}=&t^{3}(z_{1}b_{32}b_{43}+z_{3}b_{12}b_{41}
-z_{1}z_{3}b_{42}\\
&-b_{12}b_{31}b_{43}
+b_{13}b_{31}b_{42}-b_{13}b_{32}b_{41})\xi_{5}+O(t^{4}),
\end{align*}
\begin{align*}
\det P_{2}^{3}=&t^{3}(z_{1}z_{4}b_{32}-z_{1}b_{34}b_{42}-z_{4}b_{12}b_{31}\\
&+b_{12}b_{34}b_{41}+b_{14}b_{31}b_{42}-b_{14}b_{32}b_{41})\xi_{5}+O(t^{4}),
\end{align*}
\begin{align*}
\det P_{1}^{2}=&t^{3}(z_{3}z_{4}b_{12}-z_{4}b_{13}b_{32}-z_{3}b_{14}b_{42}\\
&-b_{12}b_{34}b_{43}
+b_{13}b_{34}b_{42}+b_{14}b_{32}b_{43})\xi_{5}+O(t^{4}),
\end{align*}
\begin{align*}
\det P_{3}^{4}=&t^{3}(z_{1}z_{2}b_{43}-z_{1}b_{23}b_{42}-z_{2}b_{13}b_{41}\\
&-b_{12}b_{21}b_{43}
+b_{12}b_{23}b_{41}+b_{13}b_{21}b_{42})\xi_{5}+O(t^{4}).
\end{align*}
Define a map
\begin{equation*}
G=(G_{1},G_{2},G_{3},G_{4}):\mathbb{R}^{4}\times\mathbb{R}\rightarrow\mathbb{R}^4,
\end{equation*}
where the components of $G$ are functions of $z$ and $t$, and are given by
\begin{equation*}
\det P_{3}^{4}=t^{3}G_{1},\text{ }\text{ }\text{ }\det P_{2}^{4}=t^{3}G_{2},\text{ }
\text{ }\text{ }\det P_{2}^{3}=t^{3}G_{3},\text{ }\text{ }\text{ }\det P_{1}^{2}=t^{3}G_{4}.
\end{equation*}
In order to find $\overline{z}$ such that $G(\overline{z},0)=0$, we proceed as follows. Solve
for $\overline{z}_{3}$ in terms of $\overline{z}_{1}$ from $G_{2}(\overline{z},0)=0$, and solve
for $\overline{z}_{4}$ in terms of $\overline{z}_{1}$ from $G_{3}(\overline{z},0)=0$. Now insert
these expressions for $\overline{z}_{3}$ and $\overline{z}_{4}$ into $G_{4}(\overline{z},0)=0$
to obtain a quadratic polynomial for $\overline{z}_{1}$. Under generic conditions on the parameters,
this polynomial has two nonzero real solutions or no real solutions. In the case of two real solutions
we may then find $\overline{z}_{2}$ from $G_{1}(\overline{z},0)=0$.
Furthermore
\begin{equation*}
DG(\overline{z},0)=
\left(\begin{matrix}
\overline{z}_{2}b_{43}-b_{23}b_{42}&\overline{z}_{1}b_{43}-b_{13}b_{41}&0&0\\
b_{32}b_{43}-\overline{z}_{3}b_{42}&0&b_{12}b_{41}-\overline{z}_{1}b_{42}&0\\
\overline{z}_{4}b_{32}-b_{34}b_{42}&0&0&\overline{z}_{1}b_{32}-b_{12}b_{31}\\
0&0&\overline{z}_{4}b_{12}-b_{14}b_{42}&\overline{z}_{3}b_{12}-b_{13}b_{32}
\end{matrix}\right),
\end{equation*}
which is invertible under further generic conditions. Thus the implicit function theorem gives
a function $z(t)$ with $G(z(t),t)=0$ for small $t$. We have then constructed a curve $\xi(t)$ on
which the four relevant determinant minors vanish.

We now show that all other determinant minors vanish on $\xi(t)$, again assuming appropriate generic
conditions. We will use Lemma \ref{Lemma-LinearDepend} as usual, omitting explicit details. Use that
$\det P_{2}^{3}=\det P_{2}^{4}=0$ to show that all minors constructed from elements of the second row
have zero determinant. With this and $\det P_{1}^{2}=0$ we may then show that all minors constructed
from elements of the second column have zero determinant. Furthermore, all determinant minors constructed
from elements of the third row are zero, by the above and $\det P_{3}^{4}=0$. Now that determinant minors
from two rows all vanish, we may proceed exactly as in Cases 1 and 2 to complete the argument.

To conclude the proof, let us note that in Case 1 we have obtained ten points $a\in\Sigma_{\text{sing}}(t{\bf c})\cap
\mathbb{P}^4$, and in Case 2 we have obtained $\alpha+3\gamma$ points where $\alpha$ and $\gamma$ are nonnegative
integers summing to 10. In Case 3, we imposed generic conditions on the parameters so that the quadratic polynomial
equation satisfied by $\overline{z}_{1}$ has two real solutions or no real solutions. Since there are five different
combinations of four zero components for $a$, we obtain $5\beta$ points in Case 3, where $\beta$ is a nonnegative
integer with $\beta\leq 5$. Thus the total is $10+\alpha+2\beta+3\gamma$.
\end{proof}

The proof of Theorem \ref{Thm-5dim} allows a simple characterization of the limit
set $\Lambda({\bf c})\subset\mathbb {P}^4$ (as $t\rightarrow 0$) of the singular part of the
characteristic variety in dimension 5. Namely, under generic conditions on the parameters,
it consists of the following points:

(i) the ten points $a=[a_{1},\ldots,a_{5}]$ with $a_{i}=a_{j}=0$, $i\neq j$, such that
\begin{equation}\label{eqnA1}
\sum_{k\neq i,j}c_{i}^{kj}a_{k}=0,\text{ }\text{ }\text{ }\text{ }\sum_{k\neq i,j}c_{j}^{ki}a_{k}=0;
\end{equation}

(ii) the $\alpha+3\gamma$ points $a=[a_{1},\ldots,a_{5}]$ with $a_{i}=a_{j}=a_{l}=0$, $i<j<l$,
such that
\begin{align}\label{eqnA2}
\begin{split}
& \bigg(\sum_{k\neq i,j,l}c_{i}^{kj}a_{k}\bigg)
\bigg(\sum_{k\neq i,j,l}c_{j}^{kl}a_{k}\bigg)
\bigg(\sum_{k\neq i,j,l}c_{l}^{ki}a_{k}\bigg)\\
=&\bigg(\sum_{k\neq i,j,l}c_{j}^{ki}a_{k}\bigg)
\bigg(\sum_{k\neq i,j,l}c_{l}^{kj}a_{k}\bigg)
\bigg(\sum_{k\neq i,j,l}c_{i}^{kl}a_{k}\bigg);
\end{split}
\end{align}

(iii) five points
$$[1,0,0,0,0],\ [0,1,0,0,0],\ [0,0,1,0,0],\ [0,0,0,1,0],\ [0,0,0,0,1].$$

Recall that the conditions imposed on the parameters guarantee that equations (\ref{eqnA1})
have exactly one solution, and that equation (\ref{eqnA2}) has either one or three solutions.
We have shown that the singular variety
$\Sigma_{\text{sing}}(t{\bf c})\cap \mathbb{P}^4$
lies in a neighborhood of $\Lambda({\bf c})$ for $t$ sufficiently small.

We now study the singular variety for all higher dimensions $n\ge 5$. Although the following
result is not as precise as in the 5-dimensional result above, we are able to show that the
singular variety contains a significantly large algebraic variety. We believe that a similar
analysis, on a case by case basis, as was carried out in the proof of Theorem \ref{Thm-5dim}
is possible, and will lead to a characterization of the singular variety for small $t$. However
due to the extensive calculations involved, we restrict attention to a single case below. Pick
$i,j\in\{1,\ldots,n\}$ with $i\neq j$, and consider the following generic conditions on the parameters:
\begin{equation}\label{Cond1}
c_{i}^{kj}c_{j}^{li}\neq c_{j}^{ki}c_{i}^{lj} \text{ }\text{ for all }\text{
}\text{ }k\neq l, \text{ }\text{ with }\text{ }k\neq i,j \text{
}\text{ and }\text{ }l\neq i,j,
\end{equation}
and for each $p\neq i,j$ there exists $k,l,m\not\in\{i,j\}$ with $k<l<m$ such that
\begin{align}\label{Cond2}\begin{split}
c_{p}^{kI}(c_{i}^{lj}c_{j}^{mi}-&c_{j}^{li}c_{i}^{mj})
+c_{p}^{lI}(c_{j}^{ki}c_{i}^{mj}-c_{i}^{kj}c_{j}^{mi})\\
&+c_{p}^{mI}(c_{i}^{kj}c_{j}^{li}-c_{j}^{ki}c_{i}^{lj})\neq 0,
\end{split}\end{align}
where $I=i$ or $I=j$.

\begin{theorem}\label{Thm-HighDim}
Let $n\geq 5$. If all elements $c_{i}^{kj}$ of ${\bf c}$ satisfy the conditions
(\ref{Cond1}) and (\ref{Cond2}), then for any sufficiently small
$t>0$, $\Sigma_{\text{sing}}(t{\bf c})\cap \mathbb{P}^{n-1}$ contains
an algebraic variety of dimension $n-5$.
\end{theorem}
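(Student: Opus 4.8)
The plan is to carry out the higher–dimensional analog of Case 1 in the proof of Theorem \ref{Thm-5dim}, now letting $n-4$ of the ``surviving'' coordinates be free parameters instead of a single one. Fix the pair $i=1$, $j=2$ (any pair works by the obvious symmetry), and look for curves $\xi(t)\in\mathbb{S}^{n-1}$ along which \emph{all} $(n-1)\times(n-1)$ minors of $P(\xi(t),t{\bf c})$ vanish and whose limit is $a=(0,0,a_3,\dots,a_n)$ with $(a_3,\dots,a_n)$ a solution of the two linear equations $\sum_{k\ge 3}c_1^{k2}a_k=0$ and $\sum_{k\ge 3}c_2^{k1}a_k=0$. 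By (\ref{Cond1}) (applied to two chosen indices not equal to $1,2$) this linear system has rank two, so its projectivization inside $\{a_1=a_2=0\}\subset\mathbb{P}^{n-1}$ is a linear $\mathbb{P}^{n-5}$; one may take $a_5,\dots,a_n$ as free coordinates and solve for $a_3,a_4$. On a Zariski-open subset $U$ of this $\mathbb{P}^{n-5}$ all of $a_3,\dots,a_n$ are nonzero, and this is where the construction takes place.

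The second step is to set up the implicit function theorem exactly as in Case 1. Writing $\xi_1=ty_1$, $\xi_2=ty_2$ and $x_i=y_i+\sum_{k>2}c_i^{ki}\xi_k$, the four minors $\det P_1^1,\det P_2^2,\det P_1^2,\det P_2^1$ have leading term a nonzero multiple of $t\,(a_3\cdots a_n)$ times, respectively, $x_2$, $x_1$, $b_{12}$, $b_{21}$; this forces $x_1,x_2,b_{12},b_{21}\to 0$, so one writes $x_i=tz_i$ for $i=1,2$ together with $\sum_{k>2}c_1^{k2}\xi_k=tz_3$ and $\sum_{k>2}c_2^{k1}\xi_k=tz_4$. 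Because the latter two relations have rank two in $(\xi_3,\xi_4)$ by (\ref{Cond1}), one fixes $\xi_5=a_5,\dots,\xi_n=a_n$ and solves for $\xi_3,\xi_4$ in terms of $(z_3,z_4,a_5,\dots,a_n,t)$, so that the only remaining unknowns are $z=(z_1,z_2,z_3,z_4)$. Extracting the coefficient of $t^2$ in the four minors produces a map $G(z,t):\mathbb{R}^4\times\mathbb{R}\to\mathbb{R}^4$ with $G(\overline{z},0)=0$ and $DG(\overline{z},0)=(a_3a_4\cdots a_n)I_4$, which is invertible on $U$. The implicit function theorem then yields $z(t)$, hence a curve $\xi(t)\to a$ on which $\det P_1^1=\det P_2^2=\det P_1^2=\det P_2^1=0$ for all small $t$, with $\xi(t)$ depending real-analytically on the auxiliary parameters $(a_5,\dots,a_n)$.

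The third step upgrades these four vanishings to the vanishing of all $(n-1)\times(n-1)$ minors, following the same chain of applications of Lemma \ref{Lemma-LinearDepend} as in Case 1: first delete each of the first two rows (the common $(n-1)\times(n-2)$ block has full rank since $a_3\cdots a_n\neq 0$), then each of the first two columns, then each column $r\in\{3,\dots,n\}$, for which one needs $b_{r1}(0)\neq 0$ or $b_{r2}(0)\neq 0$. These non-vanishings are exactly what (\ref{Cond2}) (with $i=1$, $j=2$) provides: substituting the solved values of $a_3,a_4$ — linear expressions in $a_5,\dots,a_n$ — into $b_{r1}$ and $b_{r2}$ and invoking (\ref{Cond2}) shows they cannot both vanish at $t=0$, generically in $(a_5,\dots,a_n)$. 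Thus $\xi(t)\in\Sigma_{\text{sing}}(t{\bf c})$. Finally, for each fixed small $t>0$ the assignment $(a_5,\dots,a_n)\mapsto\xi(t)$ is $C^1$-close to the inclusion $\mathbb{P}^{n-5}\hookrightarrow\mathbb{P}^{n-1}$, hence is a smooth embedding of (a slightly shrunken) $U$ into $\Sigma_{\text{sing}}(t{\bf c})\cap\mathbb{P}^{n-1}$; since the latter is a projective algebraic set (the common zero locus of the $(n-1)\times(n-1)$ minors), the Zariski closure of this $(n-5)$-dimensional image is an algebraic subvariety of dimension $n-5$ contained in it.

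The principal obstacle I anticipate is organizational rather than conceptual: in dimension $n$ there are many more minors and many more coefficients $b_{k\ell}$ than in the $n=5$ case, and one must verify carefully (a) that the leading-order expansion of each $\det P^b_a$ indeed produces the claimed $t^2$-map $G$ with diagonal principal part $(a_3\cdots a_n)I_4$, and (b) that (\ref{Cond2}) is precisely the condition preventing the relevant pairs $b_{r1},b_{r2}$ from vanishing simultaneously, uniformly as the parameters vary over $U$. A secondary point requiring a brief argument is that the constructed family genuinely has dimension $n-5$ and does not collapse; this holds because the limit points $a$ are already pairwise distinct for distinct projective choices of $(a_5,\dots,a_n)$, so the $t$-perturbed points $\xi(t)$ remain distinct for $t$ sufficiently small.
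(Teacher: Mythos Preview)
Your proposal is correct and follows essentially the same approach as the paper's own proof: both carry out the higher-dimensional analog of Case~1 of Theorem~\ref{Thm-5dim} with $i=1$, $j=2$, parametrize the limit set by the $(n-5)$-dimensional linear space $\{a_1=a_2=0,\ \sum_{k>2}c_1^{k2}a_k=\sum_{k>2}c_2^{k1}a_k=0\}$, apply the implicit function theorem to the four minors $\det P^1_1,\det P^2_2,\det P^1_2,\det P^2_1$ (with Jacobian $a_3\cdots a_n I_4$), and then invoke Lemma~\ref{Lemma-LinearDepend} together with (\ref{Cond2}) to propagate the vanishing to all minors. Your write-up is in fact slightly more explicit than the paper's about the parametric dependence and the final Zariski-closure step.
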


\begin{proof}
We will follow a similar strategy as in Case 1 of the proof of
Theorem \ref{Thm-5dim}. Thus, our goal will be to construct a curve
$\xi(t)\in\mathbb{S}^{n-1}\subset\mathbb{R}^{n}$ such that all
$(n-1)\times (n-1)$ determinant minors of $P(\xi(t),t{\bf c})$ vanish, for
all sufficiently small $t$. If this is to occur, then as in the
first paragraph of the proof of Theorem \ref{Thm-5dim}, we must
have (after possibly passing to a subsequence)
$$\xi(t)\rightarrow a=(a_{1},\ldots,a_{n}),$$
where two elements of $a$ vanish, say $a_{i}=a_{j}=0$. Choose the remaining components of $a$
to satisfy the following three properties:
\begin{equation*}
\prod_{k\neq i,j}a_{k}\neq 0,
\end{equation*}
\begin{equation*}
\sum_{k\neq i,j}c_{i}^{kj}a_{k}=0,\text{ }\text{ }\text{ }\text{ }
\sum_{k\neq i,j}c_{j}^{ki}a_{k}=0,
\end{equation*}
and for all $p\neq i,j,$
\begin{equation}\label{EqnB}
\sum_{k\neq i,j}c_{p}^{ki}a_{k}\neq 0\text{ }\text{ }\text{ or }\text{ }\text{ }
\sum_{k\neq i,j}c_{p}^{kj}a_{k}\neq 0.
\end{equation}
The generic conditions (\ref{Cond1}) and (\ref{Cond2}) guarantee that such an
$a$ exists. Moreover, it is clear that the set of all $a\in\mathbb{S}^{n-1}$ satisfying
these properties contains an algebraic variety of dimension $n-5$.

In what follows we will let $i=1$ and $j=2$ for convenience.
As in the first part of the proof of Theorem \ref{Thm-5dim},
we will analyze
$$\det P_1^1=\det P_1^2=\det P_2^1=\det P_2^2=0.$$
Set
$$\xi_{1}(t)=ty_{1}(t), \quad \xi_{2}(t)=ty_{2}(t).$$ Then
\begin{equation*}
\xi_{1}+tb_{11}=tx_{1}+t^{2}c_{1}^{21}y_{2},\text{ }\text{ }\text{
}\text{ } \xi_{2}+tb_{22}=tx_{2}+t^{2}c_{2}^{12}y_{1},
\end{equation*}
where
\begin{equation*}
x_{i}=y_{i}+\sum_{k>2}c_{i}^{ki}\xi_{k}.
\end{equation*}
First,
\begin{equation*}
\det P_{1}^{1}=t\xi_{3}\cdots\xi_{n}x_{2}+O(t^{2}),\text{ }\text{
}\text{ }\text{ }\det
P_{2}^{2}=t\xi_{3}\cdots\xi_{n}x_{1}+O(t^{2}).
\end{equation*}
This motivates us to write
$$x_{i}(t)=tz_{i}(t)\quad\text{for }i=1,2,$$ for some
$z_{i}$. Moreover we have
\begin{equation*}
\det P_{2}^{1}=t\xi_{3}\cdots\xi_{n}b_{21}+O(t^{2}),\text{ }\text{
}\text{ }\text{ }\det
P_{1}^{2}=t\xi_{3}\cdots\xi_{n}b_{12}+O(t^{2}).
\end{equation*}
This implies $b_{12}\to0$ and $b_{21}\to0$ as $t\to0$. This suggests that we write
\begin{equation}\label{Algeqn1}
\sum_{k>2}c_{1}^{k2}\xi_{k}=tz_{3}(t),\text{ }\text{ }\text{ }\text{ }
\sum_{k>2}c_{2}^{k1}\xi_{k}=tz_{4}(t),
\end{equation}
for some $z_{3}$ and $z_{4}$, so that
\begin{equation*}
b_{12}=t(c_{1}^{12}y_{1}+z_{3}),\text{ }\text{ }\text{ }\text{ }
b_{21}=t(c_{2}^{21}y_{2}+z_{4}).
\end{equation*}
Upon calculating the four determinants above in terms of the $z_{i}$
we obtain,
\begin{align}\label{Dett1}\begin{split}
\det
P_{1}^{1}=&t^{2}\bigg[\big(z_{2}-c_{2}^{12}\sum_{k>2}c_{1}^{k1}\xi_{k}\big)
\xi_{3}\cdots\xi_{n}\\
&-b_{23}b_{32}\xi_{4}\cdots\xi_{n}-\cdots
-b_{2n}b_{n2}\xi_{3}\cdots\xi_{n-1}\bigg]+O(t^{3}),
\end{split}\end{align}
\begin{align}\label{Dett2}\begin{split}
\det
P_{2}^{2}=&t^{2}\bigg[\big(z_{1}-c_{1}^{21}\sum_{k>2}c_{2}^{k2}\xi_{k}\big)
\xi_{3}\cdots\xi_{n}\\
&-b_{13}b_{31}\xi_{4}\cdots\xi_{n}-\cdots -b_{1n}b_{n1}\xi_{3}\cdots\xi_{n-1}\bigg]+O(t^{3}),
\end{split}\end{align}
\begin{align}\label{Dett3}\begin{split}
\det
P_{2}^{1}=&t^{2}\bigg[\big(z_{4}-c_{2}^{12}\sum_{k>2}c_{2}^{k2}\xi_{k}\big)
\xi_{3}\cdots\xi_{n}\\
&-b_{23}b_{31}\xi_{4}\cdots\xi_{n}-\cdots
-b_{2n}b_{n1}\xi_{3}\cdots\xi_{n-1}\bigg]+O(t^{3}),
\end{split}\end{align}
\begin{align}\label{Dett4}\begin{split}
\det
P_{1}^{2}=&t^{2}\bigg[\big(z_{3}-c_{1}^{21}\sum_{k>2}c_{1}^{k1}\xi_{k}\big)
\xi_{3}\cdots\xi_{n}\\
&-b_{13}b_{32}\xi_{4}\cdots\xi_{n}-\cdots
-b_{1n}b_{n2}\xi_{3}\cdots\xi_{n-1}\bigg]+O(t^{3}).
\end{split}\end{align}

Define functions $G_{i}$ by
\begin{equation*}
\det P_{2}^{2}=t^{2}G_{1},\text{ }\text{ }\text{ }\det
P_{1}^{1}=t^{2}G_{2},\text{ }\text{ }\text{ }\det P_{1}^{2}
=t^{2}G_{3},\text{ }\text{ }\text{ }\det P_{2}^{1}=t^{2}G_{4}.
\end{equation*}
We would like each $G_{i}$ to be a function of $z_{i}$ and $t$. To
see that this is the case, we recall (\ref{Cond1}) with $i=1$,
$j=2$. Since (\ref{Cond1}) holds with $k=3$ and $l=4$, we may solve
equations (\ref{Algeqn1}) for $\xi_{3}$ and $\xi_{4}$ in terms of
$\xi_{5},\ldots,\xi_{n}$, $z_{3}$, and $z_{4}$. More precisely, for these values of
$k$ and $l$, fix $\xi_{l}(t)=a_{l}$, $l=5,\ldots,n$. Then solve
to obtain
\begin{align*}
\xi_{3}(t)=& a_{3}+t(c_{1}^{32}c_{2}^{41}-c_{2}^{31}c_{1}^{42})^{-1}
[(c_{2}^{41}z_{3}(t)-c_{1}^{42}z_{4}(t))\\
&+(c_{1}^{42}c_{2}^{51}-c_{2}^{41}c_{1}^{52})a_{5}
+\cdots+(c_{1}^{42}c_{2}^{n1}-c_{2}^{41}c_{1}^{n2})a_{n}],
\end{align*}
\begin{align*}
\xi_{4}(t)=& a_{4}+t(c_{1}^{32}c_{2}^{41}-c_{2}^{31}c_{1}^{42})^{-1}
[(c_{1}^{32}z_{4}(t)-c_{2}^{31}z_{3}(t))\\
&+(c_{2}^{31}c_{1}^{52}-c_{1}^{32}c_{2}^{51})a_{5}
+\cdots+(c_{2}^{31}c_{1}^{n2}-c_{1}^{32}c_{2}^{n1})a_{n}].
\end{align*}
We now have a map
\begin{equation*}
G=(G_{1},G_{2},G_{3},G_{4}):\mathbb{R}^{4}\times
\mathbb{R}\rightarrow\mathbb{R}^{4},
\end{equation*}
with $G(\overline{z},0)=0$, where $\overline{z}$ may be determined
from (\ref{Dett1}), (\ref{Dett2}), (\ref{Dett3}) and (\ref{Dett4}).
Moreover a simple calculation shows that
$DG(\overline{z},0)=a_{3}\cdots a_{n}I_{4}$, where $I_{4}$ is the
$4\times 4$ identity matrix. By the implicit function theorem
there exists $z(t)$ such that
$G(z(t),t)=0$ for all sufficiently small $t$. Thus we have found a
curve $\xi(t)$ such that $\det P_{1}^{1}(t)=\det P_{2}^{2}(t)=\det
P_{2}^{1}(t)=\det P_{1}^{2}(t)=0$.

The fact that all remaining determinant minors vanish as well on the curve
$\xi(t)$, follows from the same arguments at the end of Case 1 in the
proof of Theorem \ref{Thm-5dim}. Here one must use that $b_{k1}\neq 0$ or
$b_{k2}\neq 0$ for all $k\neq 1,2$, which follows from (\ref{EqnB}).
\end{proof}

Now we discuss the limit set of $\Sigma_{\text{sing}}(t{\bf
c})\cap \mathbb S^{n-1}$ as $t\to 0$. In the proof of Theorem
\ref{Thm-HighDim} for $n\ge 5$, we constructed an
$(n-5)$-dimensional surface in $\Sigma_{\text{sing}}(t{\bf c})\cap
\mathbb S^{n-1}$ which can be viewed as a perturbation of
\begin{equation*}\label{eq-RemarkHigh1}
\{\x\mid \x_1=\x_2=0\}.
\end{equation*}
This corresponds to Case 1 in the proof of Theorem \ref{Thm-5dim}. Similarly, we can
construct an $(n-5)$-dimensional surface in
$\Sigma_{\text{sing}}(t{\bf c})\cap \mathbb S^{n-1}$
corresponding to Cases 2 and 3 in the proof of Theorem \ref{Thm-5dim}. By making appropriate
permutations, we can write down the limit set $\Lambda({\bf c})$ of $\Sigma_{\text{sing}}(t{\bf
c})\cap \mathbb P^{n-1}$ as $t\to 0$. In fact, under appropriate generic conditions on the
parameters, it consists of all points $a=[a_{1},\ldots,a_{n}]\in\mathbb{P}^{n-1}$ satisfying one
of the following three sets of equations:

(i) $a_{i}=a_{j}=a_{k}=a_{l}=0$ for $i<j<k<l$;

(ii) $a_{i}=a_{j}=a_{l}=0$ for $i<j<l$ such that
\begin{align*} &\bigg(\sum_{k\neq i,j,l}c_{i}^{kj}a_{k}\bigg)
\bigg(\sum_{k\neq i,j,l}c_{j}^{kl}a_{k}\bigg)
\bigg(\sum_{k\neq i,j,l}c_{l}^{ki}a_{k}\bigg)\\
=&\bigg(\sum_{k\neq i,j,l}c_{j}^{ki}a_{k}\bigg)
\bigg(\sum_{k\neq i,j,l}c_{l}^{kj}a_{k}\bigg)
\bigg(\sum_{k\neq i,j,l}c_{i}^{kl}a_{k}\bigg);
\end{align*}

(iii) $a_{i}=a_{j}=0$ for $i<j$ such that
\begin{equation*}\sum_{k\neq i,j}c_{i}^{kj}a_{k}=\sum_{k\neq i,j}c_{j}^{ki}a_{k}=0.
\end{equation*}
In terms of the rescaled principal symbol $\widetilde{P}=(\widetilde{p}_{ij})$ given by
\begin{equation*}
\widetilde{p}_{ij}=\lim_{t\rightarrow 0}t^{-1+\delta_{ij}}p_{ij},
\end{equation*}
we can express $\Lambda({\bf c})$ alternatively by
\begin{align*}
\Lambda({\bf c})=&\bigg(\bigcup_{i<j}\{\widetilde{p}_{ii}=\widetilde{p}_{jj}=
\widetilde{p}_{ij}=\widetilde{p}_{ji}=0\}\bigg)\\
&\bigcup \bigg(\bigcup_{i<j<l}\{\widetilde{p}_{ii}=\widetilde{p}_{jj}=
\widetilde{p}_{ll}
=\widetilde{p}_{ij}\widetilde{p}_{jl}\widetilde{p}_{li}
-\widetilde{p}_{ji}\widetilde{p}_{lj}\widetilde{p}_{il}=0\}
\bigg)\\
&\bigcup \bigg(\bigcup_{i<j<k<l}\{\widetilde{p}_{ii}=\widetilde{p}_{jj}=
\widetilde{p}_{kk}=\widetilde{p}_{ll}=0\}\bigg).
\end{align*}
An important observation here is that $\Lambda({\bf c})\cap \mathbb S^{n-1}$ is
smooth except along intersections of any two subsets in this
expression for $\Lambda({\bf c})$. In other words, these
intersections constitute the singular part of $\Lambda({\bf c})$.\medskip

\begin{remark}
To conclude this paper, we make a remark concerning Theorem 4.4 and Theorem 4.5.
When characteristic varieties are smooth, the corresponding linear differential
systems are of the principal type. Symmetrizers can be constructed and solutions can
be proven to exist. However, when the characteristic varieties are not smooth, a
general existence theory of solutions is not available. It is believed that
symmetrizers can still be constructed if the singular sets of the characteristic
varieties enjoy a simple geometry, as illustrated by these theorems.\end{remark}

\bibliographystyle{amsplain}

\end{document}